\theoremstyle{plain}
\newtheorem{theorem}{Theorem}[section]
\newtheorem{proposition}[theorem]{Proposition}
\newtheorem{lemma}[theorem]{Lemma}
\newtheorem{corollary}[theorem]{Corollary}
\theoremstyle{definition}
\newcommand{\R}{{\mathbb{R}}}
\newcommand{\Z}{{\mathbb{Z}}}
\newcommand{\N}{{\mathbb{N}}}
\newcommand{\T}{{\mathbb{T}}}
\newcommand{\GL}{\operatorname{GL}}
\newcommand{\Mat}{\operatorname{Mat}}
\newcommand{\Id}{\operatorname{Id}}
\newcommand{\dist}{\operatorname{dist}}
\begin{document}

\title[Simultaneous dense and nondense orbits]{Simultaneous dense and nondense orbits for noncommuting toral endomorphisms}

\begin{abstract}
 Let $S$ and $T$ be hyperbolic endomorphisms of $\T^d$ with the property that the span of the subspace contracted by $S$ along with the subspace contracted by $T$ is $\R^d$.  We show that the Hausdorff dimension of the intersection of the set of points with equidistributing orbits under $S$ with the set of points with nondense orbit under $T$ is full.  In the case that $S$ and $T$ are quasihyperbolic automorphisms, we prove that the Hausdorff dimension of the intersection is again full when we assume that $\R^d$ is spanned by the subspaces contracted by $S$ and $T$ along with the central eigenspaces of $S$ and $T$. 
 \end{abstract}


\author{Beverly Lytle}
\author{Alex Maier}

\maketitle
\tableofcontents

\section{Introduction}

Questions about the size of the sets with points satisfying certain conditions on their orbits has received much attention in recent years.  Most notably, there are many results in the area of diophantine approximation (for example, \cite{BET, BFK, Dani1, D2, EKL, KM} and further \cite{F, K2, KW1, KW2}).  Many of diophantine properties can be expressed in terms of the behavior of an orbit of a point in a homogeneous space.  For example, the notion of bad approximability can be interpreted in terms of bounded orbits, while very well approximable objects correspond to divergent orbits.  However, these types of results generally refer to the behavior of a point under one transformation or flow.  In this work we attempt to address the simultaneous behavior of a point under two different transformations.  

Let us begin with introducing our setting.  Let $\T^d = \R^d / \Z^d$ be the $d$ dimensional torus, and let $S$ be an ergodic endomorphism of $\T^d$.  Then the transformation $S$ can be realized as an integer matrix with no eigenvalue a root of unity.  In this case $S$ is called quasihyperbolic, and in the case that $S$ has no eigenvalue of modulus 1, $S$ is called hyperbolic.  It is well known that, since $S$ is ergodic, the set of points, denoted $Eq(S)$, whose orbits equidistribute under $S$ is a full measure set of $\T^d$.  Consequently, the set of points, denoted ND(S), which have nondense orbit under $S$ has measure zero as it is contained in the complement, $NEq(S)$,  of $Eq(S)$.  While the set $ND(S)$ is small in a measure theoretic sense, it does have full Hausdorff dimension \cite{BFK}.  Indeed, it satisfies a stronger property known as winning.  (Definitions of these terms are given in Section 2.)   All this is to say that in the compact case, much is known about the behavior of points under a single transformation.

Let us then introduce $T$, a second ergodic endomorphism of $\T^d$.  Clearly, as the intersection of full measure sets, $Eq(S)\cap Eq(T)$ is a full measure set, hence of full Hausdorff dimension.  Further, it is a property of winning sets, that $ND(S)\cap ND(T)$ is again winning, and therefore of full Hausdorff dimension.  Now what can be said of $Eq(S) \cap ND(T)$?  In \cite{BET}, Bergelson, Einsiedler and Tseng showed that for two commuting hyperbolic toral endomorphisms $S$ and $T$ which generate an algebraic $\Z^2$ action without rank 1 factors, $\dim(ND(T) \cap D(S)) $ is greater than the dimension of the unstable manifold determined by $T$.  (Here $D(S)$ is the set of points of $\T^d$ which have dense orbit, a slightly larger set than $Eq(S)$, and $\dim$ refers to Hausdorff dimension.)  Their work relies on the measure rigidity theorems of Einsiedler and Lindenstrauss \cite{EL} and that the generalized eigenspaces of $S$ and $T$ are aligned (since the maps commute).  In this article, we wish to determine the size of the set $Eq(S) \cap ND(T)$ in the situation that the generalized eigenspaces of $S$ and $T$ are not aligned in the following sense.


\begin{theorem}\label{Thm3}
Let $\T^d =\R^d / \Z^d$ be the $d$ dimensional torus.  Let $S$ and $T$ be quasihyperbolic automorphisms of $\T^d$. Denote by $\mathfrak{s}_-$ (resp. $\mathfrak{t}_-$) the subspace contracted by $S$ (resp. by $T$) and denote by $\mathfrak{s}_0'$ (resp. $\mathfrak{t}_0'$) the sum of the eigenspaces of $S$ (resp. of $T$) of eigenvalue of modulus 1.  Assume that $\R^d$ is spanned by $\mathfrak{s}_-\oplus \mathfrak{s}_0'$ and $\mathfrak{t}_-\oplus \mathfrak{t}_0'$.  Then 
\[
\dim( Eq(S)  \cap ND(T)) = d.
\]
\end{theorem}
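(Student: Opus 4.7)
The strategy is to extend the previously-established hyperbolic version of this theorem (with $\mathfrak{s}_-$ and $\mathfrak{t}_-$ in place of $\mathfrak{s}_- \oplus \mathfrak{s}_0'$ and $\mathfrak{t}_- \oplus \mathfrak{t}_0'$) by enlarging each strict-contraction subspace to its center-stable extension. Writing $V_1 := \mathfrak{s}_- \oplus \mathfrak{s}_0'$ and $V_2 := \mathfrak{t}_- \oplus \mathfrak{t}_0'$, the hypothesis becomes $V_1 + V_2 = \R^d$. I would foliate $\T^d$ by affine cosets of $V_2$ and prove, on almost every such coset $L$, both (i) $ND(T) \cap L$ is a winning subset of $L$ and hence of full Hausdorff dimension $\dim V_2$ in $L$, and (ii) $Eq(S) \cap L$ is of full Lebesgue measure in $L$. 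On such an $L$ the intersection of a winning set with a Lebesgue-full set is still winning, so $\dim(Eq(S) \cap ND(T) \cap L) = \dim V_2$; a Marstrand-type slicing bound over the $(d - \dim V_2)$-parameter family of cosets then yields the lower bound $\dim(Eq(S) \cap ND(T)) \geq d$, the reverse bound being trivial.

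Statement (ii) is the easier half. We already know $Eq(S)$ is Lebesgue-conull in $\T^d$, and Fubini along the foliation by $V_2$-cosets, parametrized by any linear complement $W$ of $V_2$ in $\R^d$, shows that $Eq(S)$ meets almost every coset in a subset of full Lebesgue measure on the coset. The hypothesis $V_1 + V_2 = \R^d$ enters in that it permits choosing $W \subseteq V_1$; this compatibility of the complement with the $S$-dynamics is what is needed to interface with step (i).

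Statement (i) is the quasihyperbolic generalization of the winning result of \cite{BFK}. In the strictly hyperbolic case $\mathfrak{t}_0' = 0$ a Schmidt game is played on balls inside a single $\mathfrak{t}_-$-leaf, with a winning strategy built by forcing nested choices to avoid shrinking neighborhoods of iterated $T$-preimages of a chosen point and exploiting $\|T^n v\| \to 0$ for $v \in \mathfrak{t}_-$. Now $V_2$ also contains the central subspace $\mathfrak{t}_0'$, on which $T^n$ is only non-expanding (iterates may even grow polynomially in $\R^d$ through Jordan blocks), so the standard shrinking construction does not transplant directly. I would adapt the game either by passing to a transverse game on $V_2 / \mathfrak{t}_0'$ and taking a product with a trivial game in the neutral direction, or by using a hyperplane absolute winning variant flexible enough to absorb $\mathfrak{t}_0'$; the quasihyperbolicity of $T$ (no eigenvalue is a root of unity) guarantees that central orbits do not cluster periodically, leaving the game room to win.

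The main obstacle I anticipate is step (i): extending the winning construction through the neutral direction $\mathfrak{t}_0'$, where the standard contracting argument is unavailable. A secondary, more technical, difficulty is that $V_1$, $V_2$, and $V_1 \cap V_2$ are generally irrational subspaces of $\R^d$, so the fibering and slicing arguments must be phrased in terms of Lebesgue measure on affine subspaces of $\R^d$ pushed down to $\T^d$ rather than through a clean product-torus decomposition.
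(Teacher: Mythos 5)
Your overall skeleton — split $\R^d$ using complements inside the center-stable subspaces, foliate, get a winning slice and a Fubini-conull slice, and finish with Marstrand slicing — matches the paper's architecture. But the proposal misses the actual content of the quasihyperbolic extension. The paper's route is \emph{not} to rebuild a Schmidt game on a center-stable leaf (your step (i)). Instead, it proves that $ND(T)$ is \emph{saturated} by cosets of $\mathfrak{t}\subset\mathfrak{t}_-\oplus\mathfrak{t}_0'$, i.e.\ that $x\in ND(T)$ iff $x+y\in ND(T)$ for $y\in\mathfrak{t}$, and then deduces winningness of the slice from the known winningness of $ND(T)$ in $\T^d$ via the projection lemma (Lemma~\ref{lem:WinProj}). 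The hard part — which your sketch does not address — is proving this saturation in the neutral direction $\mathfrak{t}_0'$, where $\|T^n y\|$ does not decay. The paper does this with topological entropy: by Berg's theorem, Lebesgue measure is the unique measure of maximal entropy for an ergodic toral \emph{automorphism}, so $O(x)\subsetneq\T^d$ iff $h(T|_{O(x)})<h(T)$; then $O(x+y)$ sits inside $O(x)\times\T^m$ under $T\times R$ with $R$ a zero-entropy rotation, giving $h(T|_{O(x+y)})\le h(T|_{O(x)})<h(T)$. Your remark that ``quasihyperbolicity guarantees central orbits do not cluster periodically, leaving the game room to win'' is not an argument and does not substitute for this. (Similarly, for the $Eq(S)$-side saturation along $\mathfrak{s}_0'$ the paper uses spectral types and disjointness of joinings, which you also don't engage with; Fubini alone only controls a.e.\ coset, and your parameter set will be measure zero.)

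A second concrete error: ``the intersection of a winning set with a Lebesgue-full set is still winning'' is false — e.g.\ the badly approximable numbers are winning and Lebesgue-null, so intersecting them with their (conull) complement gives the empty set. The paper avoids any such intersection on a slice precisely because of the saturation: once $a\in\mathfrak{t}\cap Eq(S)$, the \emph{entire} coset $a+\mathfrak{s}$ lies in $Eq(S)$, so the slice $B_a=(a+\mathfrak{s})\cap ND(T)$ is just a translate of $\pi_{\mathfrak{s}}(ND(T))$, with no residual intersection to worry about. So the gap is twofold: you are missing the entropy-based foliation lemma (the genuinely new ingredient beyond the hyperbolic case), and the step you use to combine the two conditions on a slice is invalid as stated.
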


We have two versions of the proofs of Theorem~\ref{Thm3} and two corollaries each following from one version of the proof and the properties of winning sets.  We explicate the corollaries following from the proof of Theorem~\ref{Thm3}.

\begin{corollary}\label{Cor3}
Let $S$ be a quasihyperbolic endomorphism of $\T^d$.  Let $\{T_k\}_{k}$ be a countable collection of  quasihyperbolic automorphisms such that each, when paired with $S$, satisfy the conditions of  Theorem~\ref{Thm3} on the generalized eigenspaces of $S$ and $T$.  Then 
\[
\dim( Eq(S)  \cap  \bigcap_{k}ND(T_k)) = d.
\] 
\end{corollary}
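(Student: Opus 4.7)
The plan is to bootstrap from the winning-flavored proof of Theorem~\ref{Thm3} by extracting a winning-type property for $ND(T)$ along suitable slices, and then exploit the stability of winning sets under countable intersection. From the proof of Theorem~\ref{Thm3} applied to a single pair $(S,T)$, I would isolate a foliation $\mathcal{F}$ of $\T^d$ by affine leaves $\mathcal{L}$ parallel to the contracting subspace $\mathfrak{s}_-$ of $S$ having the following two properties on a conull set of leaves: $Eq(S)\cap \mathcal{L}$ has full Hausdorff dimension in $\mathcal{L}$, and $ND(T)\cap \mathcal{L}$ is a winning set inside $\mathcal{L}$ (ideally, hyperplane absolute winning) with a game parameter governed by the expansion rates of $T$ transverse to $\mathfrak{s}_-$.

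Since $S$ is the same for every $k$, the $S$-contracting foliation $\mathcal{F}$ is the same for the whole family. Applying the extraction to each pair $(S,T_k)$, the set of leaves that are simultaneously generic for all $T_k$ is a countable intersection of conull sets, hence conull. On such a generic leaf $\mathcal{L}$, each $ND(T_k)\cap \mathcal{L}$ is $\alpha_k$-winning for some $\alpha_k>0$. Using that any $\alpha_k$-winning set is also $\alpha$-winning for every $\alpha\le \alpha_k$, and provided one can arrange $\alpha:=\inf_k \alpha_k>0$, the countable intersection $\bigcap_k ND(T_k)\cap \mathcal{L}$ is $\alpha$-winning inside $\mathcal{L}$, hence of full Hausdorff dimension in $\mathcal{L}$. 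If the relevant version of the proof of Theorem~\ref{Thm3} yields a hyperplane absolute winning statement, no uniform lower bound on the parameters is needed and the intersection is automatically HAW by the standard countable-intersection property of HAW sets.

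To pass from this leaf-wise conclusion to the global bound $\dim\bigl(Eq(S)\cap \bigcap_k ND(T_k)\bigr)=d$, I would invoke the same Marstrand-type slicing argument used in the proof of Theorem~\ref{Thm3}: the conull set of generic leaves has transverse Hausdorff dimension $d-\dim\mathfrak{s}_-$, and each generic leaf contributes Hausdorff dimension $\dim\mathfrak{s}_-$ from the full-dimension, winning intersection constructed above, so Marstrand's slicing lemma yields the lower bound $\dim\mathfrak{s}_-+(d-\dim\mathfrak{s}_-)=d$. The reverse inequality is trivial.

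The main obstacle I anticipate is the uniformity of the winning parameter across the countable family. If the proof of Theorem~\ref{Thm3} produces only classical Schmidt winning with a parameter $\alpha_k$ depending on $T_k$ (for instance, on its largest Lyapunov exponent and on the angle between its contracting subspace and $\mathfrak{s}_-$), then one must check $\inf_k\alpha_k>0$ separately, which is not automatic for an arbitrary countable family. Because the corollary is advertised as following from only \emph{one} of the two versions of the proof, I expect that that version in fact delivers a parameter-free absolute or hyperplane absolute winning property, in which case the countable intersection step is immediate; this is presumably also the reason why the other version of the proof of Theorem~\ref{Thm3} is not strong enough to yield Corollary~\ref{Cor3}.
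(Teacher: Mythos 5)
Your approach matches the paper's in structure: fix the foliation determined by $S$, keep the base set of good leaves coming from $Eq(S)$, show $ND(T_k)$ is winning on each leaf, intersect, and apply Marstrand slicing. The one uncertainty you flag---whether the winning parameter is uniform in $k$---is genuinely the crux, and the paper resolves it, but not via HAW as you conjecture. It uses Theorem~\ref{thm:BFK} (Broderick--Fishman--Kleinbock), which gives $1/2$-winning for $E(M,y)$ uniformly over \emph{all} integer matrices $M$; Lemma~\ref{lem:WinProj} preserves this parameter under projection, so $ND(T_k)\cap(a+\mathfrak{s})$ is $1/2$-winning for every $k$ and every $a$, and Schmidt's countable-intersection property applies with $\alpha_0 = 1/2$.

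Two imprecisions to flag. First, there is no need to take a countable intersection of conull leaf sets: the base set $A = \mathfrak{t}\cap Eq(S)$ depends only on $S$ (so it is the same for all $k$), and the winning property of $ND(T_k)$ on a leaf $a+\mathfrak{s}$ holds for \emph{every} $a$, not merely generic $a$. Second, the reason only one version of the proof of Theorem~\ref{Thm3} yields Corollary~\ref{Cor3} is not a distinction between classical winning and HAW; it is that Version 1 holds the leaf direction $\mathfrak{s} = \mathfrak{s}_-\oplus\mathfrak{s}_0'$ (depending only on $S$) fixed, while Version 2 would need a single transverse subspace subordinate to the contracting/central spaces of all $T_k$ simultaneously, which is unavailable. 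Correspondingly, the paper chooses, for each $k$, a different complementary subspace $\mathfrak{t}^k\subset\mathfrak{t}^k_-\oplus(\mathfrak{t}^k_0)'$ with $\mathfrak{s}\oplus\mathfrak{t}^k=\R^d$ in order to run the foliation-and-projection argument for $ND(T_k)$; your proposal leaves this step implicit. With these clarifications, your outline is essentially the paper's proof.
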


\begin{corollary}\label{Cor4}
Let $T$ be a quasihyperbolic automorphism of $\T^d$.  Let $\{S_k\}_{k}$ be a countable collection of  quasihyperbolic endomorphisms such that each, when paired with $T$, satisfy the conditions of  Theorem~\ref{Thm3}  on the generalized eigenspaces of $T$ and $S$.  Then 
\[
\dim(\bigcap_k Eq(S_k)\cap ND(T)) = d.
\]  
\end{corollary}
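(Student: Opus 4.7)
The plan is to mirror the proof of Theorem~\ref{Thm3} in the version that constructs a foliation depending only on $T$ and that feeds the single equidistribution set $Eq(S)$ in by Fubini; I would then extend that Fubini step to the countable family $\{Eq(S_k)\}_k$. Concretely, one expects this proof to produce a measurable foliation of $\T^d$ together with a Schmidt-type game on the generic leaf whose winning set is contained in $ND(T)$. The spanning hypothesis for $(S,T)$ ensures that the leaf dimension, combined with the transverse dimension picked up by a full-Lebesgue-measure set such as $Eq(S)$, adds to $d$, and a Marstrand-type slicing argument then delivers the full-dimension conclusion of Theorem~\ref{Thm3}.

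For Corollary~\ref{Cor4}, I would first observe that each $S_k$ is ergodic, so $Eq(S_k)\subseteq\T^d$ is of full Lebesgue measure; hence the countable intersection $\bigcap_k Eq(S_k)$ is also of full Lebesgue measure. Applying Fubini along the $T$-adapted foliation, for each $k$ there is a full-transverse-measure set of leaves on which $Eq(S_k)$ restricts to a full-leaf-measure subset. Intersecting these conclusions over the countable index set produces a single full-measure set of leaves on which $\bigcap_k Eq(S_k)$ still restricts to a full-leaf-measure subset. On any such leaf, intersecting the winning subset of $ND(T)$ with this full-measure subset preserves the leaf-dimension, and the same slicing argument as in Theorem~\ref{Thm3} then yields
\[
\dim\Bigl(\bigcap_k Eq(S_k)\cap ND(T)\Bigr)=d.
\]

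The step I expect to require the most care is verifying that the version of the proof of Theorem~\ref{Thm3} used here can indeed be arranged so that both the foliation and the winning game depend only on $T$, independently of the particular $S_k$ appearing in the Fubini step. This is precisely the reason for having two distinct proofs of Theorem~\ref{Thm3}: the version adapted to this corollary must be set up so that all game parameters (ball radii, timing between moves, choice of coding) are determined by the dynamics of $T$ alone under the fixed spanning hypothesis, so that a single slicing argument accommodates the entire countable family simultaneously. Given such uniformity, the remaining ingredients are the elementary facts that countable intersections preserve full Lebesgue measure and that intersecting a Schmidt winning set with a full-measure subset preserves its Hausdorff dimension.
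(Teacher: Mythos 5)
Your proposal follows the same line as the paper's proof: it transplants the proof of Corollary~\ref{Cor2} to the quasihyperbolic setting, taking $\mathfrak{t} = \mathfrak{t}_-\oplus\mathfrak{t}_0'$ (so the foliation by $\mathfrak{t}$-cosets depends only on $T$), an arbitrary complement $\mathfrak{s}$, $A = \mathfrak{s}\cap ND(T)$ winning in $\mathfrak{s}$, and $B_a = (a+\mathfrak{t})\cap\bigcap_k Eq(S_k)$ of full measure in $a+\mathfrak{t}$, then applying Theorem~\ref{thm:MarSlice}. Two small corrections to your description: the Schmidt game is played in the transversal $\mathfrak{s}$ via Lemma~\ref{lem:WinProj}, not on the generic leaf $a+\mathfrak{t}$ (where $ND(T)$ is all-or-nothing since $ND(T)$ is saturated by $\mathfrak{t}$-cosets), which dissolves the uniformity concern you raise --- the game depends only on $T$ and the fixed $\mathfrak{s}$, and each $S_k$ enters only through the auxiliary splitting $\R^d = \mathfrak{s}^k\oplus\mathfrak{t}$ used to show the slice $(a+\mathfrak{t})\cap Eq(S_k)$ has full measure for every $a$; and the closing claim that intersecting a winning set with a full-measure set preserves Hausdorff dimension is false in general (e.g., the badly approximable numbers and their complement), so the dimension count really must come from the slicing theorem, as you correctly invoke elsewhere.
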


We remark that with the methods we use it is not possible to say anything about sets of the form $ Eq(S_1) \cap Eq(S_2) \cap ND(T_1) \cap ND(T_2) $.

We remark further that if $S$ and $T$ are non-commuting ergodic automorphisms of the 2-torus, then the hypotheses of the theorem are automatically satisfied.  Indeed, if $S$ and $T$ share one eigenvector, then, by considering the underlying splitting field for the characteristic polynomial of $T$ (and also of $S$), one sees that the other eigenspace of $T$ and hence also of $S$ must be the conjugate.  Thus, if $S$ and $T$ share one eigenspace, then they share both, whence they commute.  (See~\cite{Tseng}.)





The outline of the paper is as follows.  In Section 2, we give some preliminaries on the basics of winning sets and ergodic toral endomorphisms.  The third section is devoted to the proof of the theorem in the case that $S$ and $T$ are hyperbolic, exposing some structure of the sets $ND(T)$ and $Eq(S)$, taking advantage of that structure to prove a lemma about winning sets, and a recollection and application of Marstrand's slicing theorem.  Section 4 contains the proofs of Theorem~\ref{Thm3} which in structure is the same as the proof contained in Section 3 but  to complete the details, requires some spectral theory, the notion of joinings of dynamical systems and topological entropy.

\section{Preliminaries}
Let $\T^d= \R^d \backslash \Z^d$ be the $d$-dimensional torus equipped with a Haar measure $m$ normalized to be a probability measure.  We also choose $\Vert \cdot \Vert$ to be the maximum norm on $\R^d$.  Let $S$ be an ergodic toral endomorphism, that is, $S$ can be given as a nonsingular $d\times d$ matrix with integer entries and each eigenvalue of this matrix is not a root of unity.  This property is often named quasihyperbolicity.  It will be convenient to think of $S$ as both a transformation on $\R^d$ and on $\T^d$. Hopefully, on which space $S$ is acting will be clear from context.  

We define the following sets related to the $\Z$-action of $S$ on $\T^d$:
\begin{align*}
D(S) &= \{ x\in \T^d ~:~ \overline{ \{S^nx\}_{n \in \N_0} } = \T^d \}, \text{ and}\\ 
Eq(S) &= \{ x \in \T^d ~:~ \frac{1}{N}\sum_{i=0}^{N-1} f ( S^n x ) \underset{N\to \infty}{\longrightarrow} \int f\, dm ~   \forall f \in C(\T^d) \}, 
\end{align*}
and their complements $ND(S) = \T^d \setminus D(S)$ and $NEq(S) = \T^d \setminus Eq(S)$.
Certainly, we have $Eq(S) \subset D(S)$.  Applying the Pointwise Ergodic Theorem of Birkhoff, we have the following fact:
\begin{proposition}
With respect to $m$, almost every point $x\in\T^d$ equidistributes under $S$.  Thus, $1 = m(Eq(S)) = m(D(S))$.
\end{proposition}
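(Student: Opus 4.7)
The plan is to derive this from the Birkhoff pointwise ergodic theorem together with the separability of $C(\T^d)$ in the uniform topology. The hypothesis that $S$ is an ergodic toral endomorphism means that the Haar measure $m$ is $S$-invariant (automatic because $S$ is a surjective group endomorphism with integer matrix of nonzero determinant) and $S$-ergodic, so Birkhoff applies directly to $(\T^d, m, S)$. The definition of $Eq(S)$ tests convergence for \emph{every} $f\in C(\T^d)$, whereas Birkhoff a priori only gives a full-measure set of convergence for each individual $f$, so the work is in swapping these quantifiers.

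First I would fix a countable uniformly dense subset $\{f_k\}_{k\in\N}\subset C(\T^d)$; such a set exists because $\T^d$ is compact metric, so one can take, for instance, trigonometric polynomials with rational coefficients (Stone–Weierstrass). Next, for each $k$, Birkhoff's theorem produces a full-measure set $E_k\subset \T^d$ on which $\frac{1}{N}\sum_{n=0}^{N-1}f_k(S^nx)$ converges to $\int f_k\,dm$ as $N\to\infty$. Setting $E=\bigcap_{k\in\N} E_k$, a countable intersection of full-measure sets still has full measure, so $m(E)=1$.

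Finally I would show $E\subset Eq(S)$ by a standard $3\e$-argument: given an arbitrary $f\in C(\T^d)$ and $\e>0$, choose $k$ with $\|f-f_k\|_\infty<\e$. Then for every $x$ and every $N$ the Birkhoff averages of $f$ and $f_k$ differ by at most $\e$, and $\bigl|\int f\,dm-\int f_k\,dm\bigr|<\e$, so for $x\in E$ the limsup and liminf of $\frac{1}{N}\sum f(S^nx)$ lie within $2\e$ of $\int f\,dm$. Letting $\e\to 0$ gives convergence, so $E\subset Eq(S)$ and hence $m(Eq(S))=1$. Since equidistributing orbits are in particular dense, $Eq(S)\subset D(S)$, and therefore $m(D(S))=1$ as well.

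The only point requiring care is the aforementioned quantifier switch—the null set in Birkhoff depends on $f$, and separability of $C(\T^d)$ is what permits one to pass to a single full-measure set of simultaneous equidistribution. There is no substantive obstacle beyond this routine bookkeeping; the result is really just a repackaging of ergodicity plus separability.
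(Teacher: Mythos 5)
Your proof is correct and takes essentially the same route as the paper, which simply cites the Birkhoff pointwise ergodic theorem without writing out the details; your filling in of the standard separability/quantifier-swap argument is exactly what that citation suppresses.
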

It follows then that $NEq(S)$ and $ND(S)\subset NEq(S) $ are sets of measure zero.  However, $ND(S)$ is ``large" in another sense.  To understand this notion of ``large", we recall the definition and some properties of winning sets in the sense of Schmidt.  

In \cite{S1}, W. Schmidt introduced a game and the definition of winning along with a few properties of winning sets.  The game is played on $(X,\dist)$, a complete metric space.  Denote by $B(x,r)$ the closed metric ball around a point $x$ of radius $r$.  The setup of the two player game is given by two parameters $0<\alpha,\beta<1$, a set $S\subset X$, and the choice of one of the players, let's call him Bob, of a ball $B_0 = B(x_0, \rho)$.  The first round begins with the other player,  called Alice, choosing a center point of a ball $y_1$ such that $A_1 = B(y_1,\rho\alpha)\subset B_0$.  Bob chooses the next center point of a ball $x_1$ such that $B_1= B(x_1, \rho\alpha\beta) \subset A_1$.  This procedure is iterated with the $n$th round of the game beginning with Alice choosing a point $y_n$ with $A_n=B(y_n,\rho\alpha(\alpha\beta)^{n-1})\subset B_{n-1}$, and continuing with Bob choosing a point $x_n$ satisfying $B_n = B(x_n,\rho(\alpha\beta)^n) \subset A_n$.  At the end of the game, there remains one point $x_\infty \in \bigcap B_n$.  If $x_\infty \in S$, then Alice wins.  If Alice can always find a winning strategy independent of the moves of Bob, the set $S$ is $(\alpha,\beta)$-winning.  If there exists $\alpha$ such that $S$ is $(\alpha, \beta)$-winning for all $\beta>0$, then $S$ is an $\alpha$-winning set, which may be shortened to $S$ is a winning set.  

Winning sets have a number of useful properties for computing Hausdorff dimension.  Schmidt showed in \cite{S1} that winning sets within $X= \R^d$ have Hausdorff dimension $d$ (although more general statements exist~\cite{F1,F2}).  Moreover he showed in \cite{S1} that for a countable collection $\{S_i\}$ of $\alpha_i$-winning sets with $\inf \alpha_i = \alpha_0 >0$, the intersection $\bigcap_i S_i$ is $\alpha_0$-winning.

The following theorem of Broderick, Fishman and Kleinbock \cite{BFK} will be useful. 
\begin{theorem}\label{thm:BFK}
For any $M \in \GL_d(\R) \cap \Mat_{d,d}(\Z)$ and for any $y \in \T^d$, the set $E(M,y)$ is $1/2$-winning in $\T^d$.
\end{theorem}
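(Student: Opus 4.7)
The plan is to construct an explicit strategy for Alice in the $(1/2,\beta)$-Schmidt game for every $\beta \in (0,1)$, ensuring that the unique nested intersection point $x_\infty$ lies in $E(M,y)$. Write $\rho_n = \rho(\beta/2)^{n-1}$ for the radius of Bob's ball $B_{n-1}$ at the start of round $n$; Alice must pick $A_n = B(y_n,\rho_n/2)$ with $y_n$ anywhere in $B(x_{n-1},\rho_n/2)$. The goal is to arrange, for some fixed $\e > 0$ chosen in advance, that $x_\infty \notin M^{-k}(B(y,\e))$ for every $k \geq 0$; this is equivalent to $M^k x_\infty \notin B(y,\e)$ for every $k \geq 0$, hence to $x_\infty \in E(M,y)$.

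The structural input is that $M \colon \T^d \to \T^d$ is a Haar-measure-preserving covering of degree $|\det M| \geq 1$, so $M^{-k}(B(y,\e))$ is a disjoint union of $|\det M|^k$ ``cells,'' each a small ellipsoidal neighborhood of a distinct preimage of $y$ with semi-axes controlled by the singular values of $M^{-k}$. In particular, the number of cells meeting $B_{n-1}$ at level $k$ is $O\!\left(|\det M|^k\,\rho_n^d\right)$. Alice's strategy at round $n$ is to pick a scale-appropriate integer $k_n$, namely the largest $k$ with $|\det M|^k\,\rho_n^d$ below a fixed small constant, and to steer $A_n$ away from every level-$k$ cell, $k_{n-1} < k \leq k_n$, that meets $B_{n-1}$. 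A clustering argument---using that level-$(k+1)$ preimages refine level-$k$ preimages via the local inverse branches of $M$---together with the counting bound shows that these finitely many bad cells lie in a common sub-ball of $B_{n-1}$ of radius $<\rho_n/4$, once $\e$ has been chosen small enough. The $\alpha = 1/2$ freedom then lets Alice place $y_n$ at $\dist(y_n,x_{n-1}) = \rho_n/2$ in a direction pointing away from this sub-ball, so that $A_n$ misses every targeted cell. Iterating over all rounds, and further shrinking $\e$ in advance so that no level-$k$ cell for $k < k_1$ meets $B_0$, one obtains $x_\infty \notin M^{-k}(B(y,\e))$ for every $k \geq 0$, so $x_\infty \in E(M,y)$.

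The main obstacle is the clustering/counting step that confines the bad cells inside $B_{n-1}$ to a sub-ball of radius $<\rho_n/4$. When $\beta$ is close to $1$ there is typically only one new level per round and at most one cell meets $B_{n-1}$, so the estimate is essentially trivial. When $\beta$ is small, however, each round must absorb many levels at once, and ruling out a worst-case spread of bad cells across $B_{n-1}$ requires analyzing the anisotropic geometry of $M^{-k}$---the decay of singular values of $M^{-k}$ along the unstable directions of $M$---combined with a lattice-point count matched to the geometric scaling of $\rho_n$. This is the technical heart of the Broderick--Fishman--Kleinbock argument; once it is in hand, the remaining game-theoretic verification is routine (including the degenerate case where Bob happens to place $x_{n-1}$ exactly at a bad preimage point, which Alice escapes by playing $y_n$ at the boundary of her allowed region and avoiding that point at round $n+1$ instead).
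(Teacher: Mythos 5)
First, note that the paper does not actually prove Theorem~\ref{thm:BFK}; it is quoted directly from Broderick, Fishman, and Kleinbock \cite{BFK} (which in turn extends Dani \cite{D2}) and used as a black-box ingredient, so there is no in-paper argument for your attempt to be compared with.

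As a reconstruction of the BFK argument, your outline has the right opening moves --- fix $\e>0$, aim for $x_\infty\notin M^{-k}\bigl(B(y,\e)\bigr)$ for all $k\geq 0$, and choose the working level $k_n$ by balancing $|\det M|^{k_n}\rho_n^d$ against a small constant --- but the step you yourself flag as the ``technical heart'' is misstated, and in the form you give it is false. The level-$k$ preimages of $y$ form a coset of the lattice $M^{-k}\Z^d$, and the only hypothesis here is $M\in\GL_d(\R)\cap\Mat_{d,d}(\Z)$, so $M$ may have eigenvalues of modulus both greater than $1$ and at most $1$ (the cat map already qualifies). For such $M$, the lattice $M^{-k}\Z^d$ is strongly anisotropic, its successive minima are spread over exponentially separated scales, and the preimage points lying inside $B_{n-1}$ typically form a long string stretched along a proper affine subspace through most of the ball. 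They do \emph{not} collect inside a sub-ball of radius $<\rho_n/4$, and shrinking $\e$ cannot repair this: $\e$ only shrinks the cells around the preimage points, not the spread of the points themselves. (Even granting your clustering claim, the dodging move fails when the alleged sub-ball is near $x_{n-1}$, since with $\alpha=1/2$ Alice cannot disjoin $A_n$ from a ball of radius $\rho_n/4$ centered at $x_{n-1}$.) The correct structural statement, and the one Dani and BFK actually establish, is that the relevant preimages near $B_{n-1}$ lie in a thin neighborhood of a proper \emph{affine subspace}, after which the game-theoretic step is the (still nontrivial, $\beta$-dependent) argument that Alice with $\alpha=1/2$ can force $x_\infty$ off a thin neighborhood of such a subspace over several rounds. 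Replacing ``sub-ball'' by ``affine subspace'' changes both the geometry that must be proved and the shape of Alice's strategy, so this is a genuine gap rather than a cosmetic difference.
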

Here $E(M,y) = \{ x \in \T^d ~:~ y \notin \overline{ \{M^n x\} } \}$.  Clearly for any $y\in T^d$, $E(S,y)$ is a subset of $ND(S)$.  Thus we have that:
\begin{corollary}
For any ergodic toral endomorphism $S$, the set $ND(S)$ is $1/2$-winning, and hence $\dim(ND(S))=d$.  
\end{corollary}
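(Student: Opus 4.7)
The plan is to deduce the corollary directly from Theorem~\ref{thm:BFK} using two standard catalogued properties of Schmidt's winning sets: monotonicity under inclusion, and the fact that winning subsets of spaces locally modeled on $\R^d$ have full Hausdorff dimension. No new machinery is needed beyond what has already been set up in Section 2.

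First I would verify the hypotheses of Theorem~\ref{thm:BFK}. Since $S$ is an ergodic toral endomorphism, its matrix representative is a nonsingular $d\times d$ integer matrix (ergodicity prohibits a zero eigenvalue, as zero is a root of unity in the weak sense of being a root of $x=0$; more properly, ergodicity already forces quasihyperbolicity, and the definition in the preliminaries stipulates nonsingularity). Hence $S \in \GL_d(\R) \cap \Mat_{d,d}(\Z)$ and Theorem~\ref{thm:BFK} applies: for any fixed $y \in \T^d$ (say $y=0$), the set $E(S,y)$ is $1/2$-winning. As already noted in the excerpt, the inclusion $E(S,y) \subset ND(S)$ is immediate from the definitions, since $y \notin \overline{\{S^n x\}}$ forces $\overline{\{S^n x\}} \neq \T^d$.

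The second step is to promote the winning property of the subset to the superset. This uses the elementary monotonicity property: if $A \subset B$ and $A$ is $\alpha$-winning, then $B$ is $\alpha$-winning. The argument is just that any strategy for Alice forcing $x_\infty \in A$ simultaneously forces $x_\infty \in B$, so her winning strategy for $A$ is also a winning strategy for $B$. Applying this with $A = E(S,y)$ and $B = ND(S)$ gives that $ND(S)$ is $1/2$-winning.

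Finally, the Hausdorff dimension conclusion is Schmidt's theorem that $\alpha$-winning subsets of $\R^d$ have Hausdorff dimension $d$, transported to $\T^d$ via the fact that $\T^d$ is locally isometric to $\R^d$ under the chosen maximum norm (so the game played in small balls of $\T^d$ is identical to the game played in $\R^d$). There is no real obstacle in this proof; the only points requiring any care are identifying $S$ as a member of $\GL_d(\R) \cap \Mat_{d,d}(\Z)$ and noting that monotonicity of winning, rather than any deeper closure property, is what is used.
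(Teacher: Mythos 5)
Your proposal is correct and follows essentially the same route the paper takes: apply Theorem~\ref{thm:BFK} to get that $E(S,y)$ is $1/2$-winning, observe $E(S,y)\subset ND(S)$, invoke monotonicity of the winning property, and finish with Schmidt's full-dimension theorem for winning sets. One parenthetical remark is wrong as stated --- zero is not ``a root of unity in the weak sense of being a root of $x=0$'' (roots of unity are roots of $x^n=1$) --- but you immediately supply the correct justification, namely that the paper's definition of an ergodic toral endomorphism already stipulates nonsingularity, so the hypothesis $S\in\GL_d(\R)\cap\Mat_{d,d}(\Z)$ holds and the argument is unaffected.
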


To finish we establish some notation regarding the eigenspaces of $S$.  Viewing $S$ as a transformation of $\R^d$ (which maps $\Z^d$ into itself), we have the following decomposition
\[
\R^d = \mathfrak{s}_- \oplus \mathfrak{s}_0 \oplus \mathfrak{s}_+,
\]
where $\mathfrak{s}_-$ is the subspace corresponding to the eigenvalues of $S$ of modulus less than 1, $\mathfrak{s}_+$ is the subspace corresponding to the eigenvalues of $S$ of modulus greater than 1, and $\mathfrak{s}_0$ is the subspace corresponding to all other eigenvalues of $S$.  For a second ergodic toral endomorphism  $T$, we will use similar notation for the decomposition
\[
\R^d = \mathfrak{t}_- \oplus \mathfrak{t}_0 \oplus \mathfrak{t}_+.
\]

When speaking about a measure we always mean the Lebesgue measure. Of which dimension should be clear from context, since we are often talking about subspaces.

\section{Hyperbolic toral endomorphisms}
In this section, we specialize to the case of hyperbolic (and of course ergodic) toral endomorphisms, and give a proof of the main theorem in this case.   Specifically, we prove:

\begin{theorem}\label{Thm1}
Let $\T^d =\R^d / \Z^d$ be the $d$-dimensional torus.  Let $S$ and $T$ be hyperbolic endomorphisms of $\T^d$ and let $\mathfrak{s}_-$ be the subspace contracted by $S$ and $\mathfrak{t}_-$ the subspace contracted by $T$.  Assume that both $\mathfrak{s}_-$ and $\mathfrak{t}_-$ are nontrivial and that they span $\R^d$. Then 
\[ \dim( Eq(S)  \cap ND(T) )= d.\]
\end{theorem}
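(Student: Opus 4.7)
The plan is to exhibit, inside $Eq(S)\cap ND(T)$, a set of Hausdorff dimension $d$ by combining a translation invariance of $Eq(S)$ with the winning property of $ND(T)$ supplied by Theorem~\ref{thm:BFK}, glued together via Marstrand's slicing theorem. First I would establish the structural fact that $Eq(S)$ is invariant on $\T^d$ under translation by (the image of) any $v\in\mathfrak{s}_-$: since $\|S^nv\|\to 0$ exponentially, $S^nx$ and $S^n(x+v)$ have the same Birkhoff averages against any $f\in C(\T^d)$, so $x\in Eq(S)\iff x+v\in Eq(S)$. Using the hypothesis $\mathfrak{s}_-+\mathfrak{t}_-=\R^d$, choose a linear subspace $W\subset\mathfrak{s}_-$ with $W\oplus\mathfrak{t}_-=\R^d$; set $p:=\dim\mathfrak{t}_-$ and fix a small ball $B_0\subset\R^d$ that injects into $\T^d$, with splitting coordinates $(w,u)\in W\times\mathfrak{t}_-$. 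By the $W$-invariance, $Eq(S)\cap B_0$ is a union of complete $W$-fibres $L_u:=(W\times\{u\})\cap B_0$, so Fubini forces the base set $A:=\{u:L_u\subset Eq(S)\}$ to have full Lebesgue measure, hence Hausdorff dimension $p$, in $\pi_{\mathfrak{t}_-}(B_0)$.

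\textbf{Leafwise winning lemma.} The key lemma to prove asserts that for every $y\in\T^d$ and every affine $W$-leaf $L\subset B_0$, the set $E(T,y)\cap L$ is $1/2$-winning in $L$ (with the induced Euclidean metric). I would prove this by adapting the Schmidt-game strategy behind Theorem~\ref{thm:BFK} to the slice: at step $n$ of the game on $L$, Alice must place her ball so as to avoid the preimages $T^{-n}(B(y,\epsilon_n))$ restricted to $L$, for a suitable schedule $\epsilon_n$. In $\R^d$ each local component of $T^{-n}(B(y,\epsilon))$ is a parallelepiped of extent $\lesssim\epsilon e^{-n\lambda_+}$ in $\mathfrak{t}_+$ and $\lesssim\epsilon e^{n|\lambda_-|}$ in $\mathfrak{t}_-$, where $\lambda_+>0$ is the minimal expansion exponent of $T$ on $\mathfrak{t}_+$. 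Because $W\oplus\mathfrak{t}_-=\R^d$, the projection along $\mathfrak{t}_-$ restricts to a bounded linear isomorphism $W\to\mathfrak{t}_+$, so the intersection of each such component with any affine $W$-leaf has diameter only $\lesssim\epsilon e^{-n\lambda_+}$ and shrinks exponentially in $n$. This exponential decay is exactly the ingredient the BFK strategy needs; it can be run leaf-by-leaf to yield a $1/2$-winning set in $L$, of Hausdorff dimension $d-p$ by Schmidt's theorem.

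\textbf{Conclusion and main obstacle.} Combining the two steps: for each $u\in A$ the slice $L_u$ lies entirely in $Eq(S)$ and contains $E(T,y)\cap L_u\subset ND(T)$ of dimension $d-p$. Marstrand's slicing theorem, in its lower-bound form for fibrations parallel to a fixed subspace $W$, then gives
\[
\dim\bigl(Eq(S)\cap ND(T)\bigr)\;\geq\;\dim A+(d-p)\;=\;p+(d-p)\;=\;d,
\]
as desired. I expect the main obstacle to lie in the leafwise winning lemma: carefully verifying that Alice's BFK avoidance strategy survives when she is confined to a $(d-p)$-dimensional slice, with uniform control on the geometry of $T^{-n}(B(y,\epsilon))\cap L$, is where the transversality hypothesis $\mathfrak{s}_-+\mathfrak{t}_-=\R^d$ is used in an essential way; Steps 1 and 3 are then relatively routine consequences of ergodicity and standard slicing geometry.
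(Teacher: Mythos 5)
Your high-level plan coincides with the paper's: fibre $\T^d$ over a base along $\mathfrak{s}_-$-leaves, use Fubini to get a full-measure base for $Eq(S)$, show the leaf-slices of $ND(T)$ carry a winning set of full leaf-dimension, and assemble via Marstrand slicing. Step 1 and the Marstrand step are essentially identical to Lemmas 3.1, 3.2 and Theorem 3.5 in the paper.

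The genuine gap is in your ``leafwise winning lemma.'' You propose to re-run the BFK Schmidt-game avoidance strategy inside a transversal $W$-slice, controlling the geometry of $T^{-n}(B(y,\epsilon))\cap L$ directly, and you yourself flag this as the main obstacle without carrying it out. This is far harder than it needs to be, and as written it is only a heuristic: the BFK theorem is a black box here, and its internal game strategy is not in a form that can simply be ``restricted'' to a $(d-p)$-dimensional slice without substantial new estimates. The observation you are missing is the symmetric one to your Step 1: since $\mathfrak{t}_-$ is contracted by $T$, the set $ND(T)$ is likewise saturated by translates, namely $x\in ND(T)\iff x+y\in ND(T)$ for all $y\in\mathfrak{t}_-$. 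Choosing, as you do, $W\subset\mathfrak{s}_-$ with $W\oplus\mathfrak{t}_-=\R^d$, this means $ND(T)=\pi_W^{-1}\bigl(W\cap ND(T)\bigr)$ is a product set, so for each base point $a\in\mathfrak{t}_-$ the slice is just a translate, $(a+W)\cap ND(T)=a+\bigl(W\cap ND(T)\bigr)$. The winning property of $W\cap ND(T)$ in $W$ then follows from a short, purely metric lemma (the paper's Lemma~\ref{lem:WinProj}): if $U=V\times W'$ and $\pi_V^{-1}(A)$ is $\alpha$-winning in $U$, then $A$ is $\alpha$-winning in $V$ (Alice simply plays the known strategy upstairs and projects her moves). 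Combined with Theorem~\ref{thm:BFK}, this immediately gives that the slices have full dimension, with no re-examination of the geometry of preimages $T^{-n}(B(y,\epsilon))$. You should replace your leafwise geometric argument with this foliation-plus-projection argument; as it stands, the lemma you actually need is unproved.
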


We give two proofs of this theorem.  By properties of winning sets, we will have two immediately corollaries, one from each version of the proof of the theorem.

\begin{corollary}\label{Cor1}
Let $S$ be a hyperbolic endomorphism of $\T^d$.  Let $\{T_k\}_{k }$ be a countable collection of hyperbolic endomorphisms such that each, when paired with $S$, satisfy the conditions on the generalized eigenspaces of  Theorem~\ref{Thm1} on $S$ and $T$.  Then 
\[
 \dim( Eq(S)  \cap  \bigcap_{k}ND(T_k)) = d. 
 \]
\end{corollary}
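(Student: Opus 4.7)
The plan is to mimic the proof of Theorem~\ref{Thm1}, exploiting the fact that the winning constant it produces for each $ND(T_k)$ is the uniform value $1/2$ from Theorem~\ref{thm:BFK}, and then invoking Schmidt's countable intersection closure for $\alpha$-winning sets with common $\alpha$.

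Concretely, I will assume that the proof of Theorem~\ref{Thm1} proceeds via Marstrand's slicing theorem applied to an affine foliation $\{L_x\}$ of $\T^d$ by translates of a subspace $V \subset \R^d$, where $V$ is a complement of $\mathfrak{s}_-$ chosen using the spanning hypothesis so that $T$ has a contracting direction inside each slice. The two key ingredients it extracts are: (a) on a transversally Lebesgue-full set of slices $L_x$, the intersection $L_x \cap Eq(S)$ has full $\dim V$-dimensional measure in $L_x$; and (b) on each such slice, the set $L_x \cap ND(T)$ is $1/2$-winning in $L_x$ with its induced Euclidean metric. Marstrand's slicing theorem applied to (a) and (b) yields $\dim(Eq(S) \cap ND(T)) = d$.

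For the corollary, I would pick a single $V$ that simultaneously satisfies the compatibility required with each pair $(S, T_k)$: the conditions imposed on $V$ by each pair are generic on the Grassmannian of complements to $\mathfrak{s}_-$, and countably many such generic conditions can be satisfied by a common $V$. With this fixed foliation $\{L_x\}$, property (a) is unchanged, while property (b) holds separately for each $T_k$ with the uniform winning constant $1/2$. Schmidt's closure theorem then gives that
\[
L_x \cap \bigcap_k ND(T_k) \;=\; \bigcap_k \bigl( L_x \cap ND(T_k) \bigr)
\]
is $1/2$-winning in $L_x$, hence of full Hausdorff dimension $\dim V$. Combining with (a) and applying Marstrand's slicing exactly as in Theorem~\ref{Thm1} yields $\dim(Eq(S) \cap \bigcap_k ND(T_k)) = d$.

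The hard part is wholly inherited from Theorem~\ref{Thm1}: upgrading the ambient winning of $ND(T)$ in $\T^d$ from Theorem~\ref{thm:BFK} to winning on the lower-dimensional slice $L_x$, with a winning constant that depends only on the ambient dimension. Given that such an upgrade is in hand, no new obstacle arises from passing to a countable family, because the $1/2$-winning constant is identical for every $T_k$, and Schmidt's countable-intersection property handles the rest. The one bookkeeping point is verifying that the single $V$ can be chosen compatibly for all $T_k$; this is a generic transversality question on a Grassmannian and poses no genuine difficulty.
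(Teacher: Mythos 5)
Your high-level idea --- that the $1/2$-winning constant from Theorem~\ref{thm:BFK} is uniform in $T_k$, so Schmidt's countable-intersection theorem for $\alpha$-winning sets lets you intersect the slice sets before applying Marstrand --- is exactly the crux of the paper's proof, and you have it. But the way you set up the slicing would block the argument if followed literally, and the ``bookkeeping point'' you flag misidentifies what must be held fixed across the family $\{T_k\}$.

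First, you put both ingredients (a) and (b) on the same family of slices $L_x = x + V$: full Lebesgue measure of $L_x \cap Eq(S)$ and $1/2$-winning of $L_x \cap ND(T)$. Theorem~\ref{thm:MarSlice} takes a set $A$ in a transversal and slice sets $B_a$; it does not let you combine a full-measure set and a winning set living in the same slice, because a winning set can be Lebesgue null and hence disjoint from a prescribed full-measure subset. The paper's proof avoids this by exploiting the foliated structure: each leaf $a+\mathfrak{s}$ is either entirely in or entirely disjoint from $Eq(S)$, so once $a\in A:=\mathfrak{t}\cap Eq(S)$ the whole slice lies in $Eq(S)$, and the slice set $B_a:=(a+\mathfrak{s})\cap ND(T)$ is automatically inside $Eq(S)\cap ND(T)$. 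That is what makes the Marstrand step legitimate, and your (a)/(b) framing does not reproduce it.

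Second, the object that must be common to all $T_k$ is the slice direction $\mathfrak{s}$, not a single complement $V$ of $\mathfrak{s}_-$. The paper takes $\mathfrak{s}=\mathfrak{s}_-$ outright and then, separately for each $k$, chooses $\mathfrak{t}^k\subset\mathfrak{t}^k_-$ with $\mathfrak{s}_-\oplus\mathfrak{t}^k=\R^d$ (possible pair by pair from the spanning hypothesis). Projecting along $\mathfrak{t}^k$ makes $(a+\mathfrak{s})\cap ND(T_k)$ a $1/2$-winning subset of $a+\mathfrak{s}$; these winning sets all live in the same ambient $a+\mathfrak{s}$, so Schmidt's intersection theorem applies. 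The Marstrand transversal $\mathfrak{t}$ can be an arbitrary complement of $\mathfrak{s}_-$ and plays no role in the winning argument. No single $V$ has to be compatible with every $T_k$, and no genericity on a Grassmannian is needed; indeed, depending on what ``compatible'' should mean (for instance, a common subspace lying inside every $\mathfrak{t}^k_-$), such a $V$ need not exist.
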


\begin{corollary}\label{Cor2}
Let $T$ be a hyperbolic endomorphism of $\T^d$.  Let $\{S_k\}_{k }$ be a countable collection of hyperbolic endomorphisms such that each, when paired with $T$, satisfy the conditions on the generalized eigenspaces of  Theorem~\ref{Thm1} on $T$ and $S$.  Then 
\[
\dim(\bigcap_k Eq(S_k)\cap ND(T) )= d.
\]  
\end{corollary}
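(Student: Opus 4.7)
The plan is to follow the version of the proof of Theorem~\ref{Thm1} that exploits a product decomposition of $\R^d$, and to use the linear structure of each subspace $\mathfrak{s}_k^-$ to absorb the additional equidistribution constraints into a single Fubini-plus-translation argument.

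Fix any $S_1$ from the family and use the splitting $\R^d=\mathfrak{s}_1^-\oplus\mathfrak{t}_-$ granted by the hypothesis of Theorem~\ref{Thm1} applied to the pair $(S_1,T)$; write coordinates $(u,w)$ with $u\in\mathfrak{s}_1^-$, $w\in\mathfrak{t}_-$. Two structural inputs from Section~3 I will use wholesale are: (i) $Eq(S_1)$ is invariant under $\mathfrak{s}_1^-$-translations (because contraction by $S_1$ along $\mathfrak{s}_1^-$ preserves Birkhoff averages), and symmetrically $ND(T)$ is invariant under $\mathfrak{t}_-$-translations; and (ii) writing $ND(T)=A\times\mathfrak{t}_-$ using (i), the winning lemma of Section~3 shows $A\subseteq\mathfrak{s}_1^-$ is $\alpha$-winning in $\mathfrak{s}_1^-$, so in particular $\dim A=\dim\mathfrak{s}_1^-$.

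To handle each additional $S_k$ with $k\geq 2$, the key observation is that the hypothesis of Theorem~\ref{Thm1} for the pair $(S_k,T)$ gives $\mathfrak{s}_k^-\oplus\mathfrak{t}_-=\R^d$, so $\mathfrak{s}_k^-$ is the graph $\{(u,f_k(u)):u\in\mathfrak{s}_1^-\}$ of some linear map $f_k:\mathfrak{s}_1^-\to\mathfrak{t}_-$ in the chosen coordinates. Define the fiber
\[
B_k(u)=\{w\in\mathfrak{t}_-:(u,w)\in Eq(S_k)\}.
\]
The $\mathfrak{s}_k^-$-invariance of $Eq(S_k)$ converts into the translation identity $B_k(u+u')=B_k(u)+f_k(u')$ for all $u,u'\in\mathfrak{s}_1^-$. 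Fubini applied to $Eq(S_k)$ (of full Haar measure) yields some $u_0$ with $B_k(u_0)$ of full Lebesgue measure in $\mathfrak{t}_-$; the translation identity then upgrades this to $B_k(u)$ of full Lebesgue measure for \emph{every} $u\in\mathfrak{s}_1^-$. The trivial version of the same observation gives $B_1(u)=B_1$ of full measure, independent of $u$. Hence for every $u\in\mathfrak{s}_1^-$ the set $F(u):=\bigcap_k B_k(u)$ is a countable intersection of full-measure subsets of $\mathfrak{t}_-$, so is itself of full measure and of Hausdorff dimension $\dim\mathfrak{t}_-$.

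In product coordinates the target set reads $\{(u,w):u\in A,\;w\in F(u)\}$. Over each $u\in A$ the slice has dimension $\dim\mathfrak{t}_-$, and $\dim A=\dim\mathfrak{s}_1^-$, so Marstrand's slicing theorem (in the lower-bound form for a union of slices over a base of positive dimension) gives
\[
\dim\Bigl(\bigcap_k Eq(S_k)\cap ND(T)\Bigr)\geq\dim A+\dim\mathfrak{t}_-=\dim\mathfrak{s}_1^-+\dim\mathfrak{t}_-=d,
\]
with the reverse inequality automatic. The main hazard to be avoided is the naive "winning set meets full-measure set" step, which in general does not preserve Hausdorff dimension: were one simply to delete the Fubini exceptional $u$'s from $A$, the resulting set could in principle shrink in dimension. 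This is defused by the translation identity $B_k(u)=B_k(u_0)+f_k(u-u_0)$ coming from $\mathfrak{s}_k^-$-invariance, which promotes "full measure for a.e.\ $u$" to "full measure for every $u$", so no exceptional set needs to be excised from $A$ and the winning-ness of $A$ can be used in full.
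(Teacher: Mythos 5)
Your overall route is the paper's, and the central move---using translation-invariance of $Eq(S_k)$ along a subspace of its stable space $\mathfrak{s}_k^-$ to upgrade ``the slice has full measure for a.e.\ base point'' to ``the slice has full measure for \emph{every} base point''---is exactly what justifies the paper's terse assertion that ``each of the sets $B_a^k$ is of full measure.'' However, the coordinate setup overreaches. You write $\R^d=\mathfrak{s}_1^-\oplus\mathfrak{t}_-$ as if Theorem~\ref{Thm1} guaranteed a direct sum, but the hypothesis only requires the two stable subspaces to \emph{span} $\R^d$; they may intersect nontrivially (e.g.\ $d=3$ with both stable spaces $2$-dimensional). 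For the same reason, $\mathfrak{s}_k^-$ need not be a graph of a linear map $\mathfrak{s}_1^-\to\mathfrak{t}_-$: that description forces $\dim\mathfrak{s}_k^-=\dim\mathfrak{s}_1^-$ and $\mathfrak{s}_k^-\cap\mathfrak{t}_-=\{0\}$, neither of which follows from the spanning hypothesis.

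The repair is the one the paper actually makes: set $\mathfrak{t}=\mathfrak{t}_-$, pick \emph{any} complementary subspace $\mathfrak{s}$ (it need not sit inside any $\mathfrak{s}_k^-$, since on that side you invoke only the projection/winning lemma, which imposes no constraint tying $\mathfrak{s}$ to the $S_k$), and for each $k$ choose a subspace $\mathfrak{s}^k\subseteq\mathfrak{s}_k^-$ complementary to $\mathfrak{t}_-$; this is possible precisely because $\mathfrak{s}_k^-+\mathfrak{t}_-=\R^d$. Each $\mathfrak{s}^k$ is then genuinely a graph of a linear map $\mathfrak{s}\to\mathfrak{t}_-$, and since $Eq(S_k)$ is invariant under translations by $\mathfrak{s}^k$ (any subspace of $\mathfrak{s}_k^-$ is still contracted by $S_k$), your translation identity $B_k(u+u')=B_k(u)+f_k(u')$ and everything after it go through verbatim with $\mathfrak{s}^k$ in place of $\mathfrak{s}_k^-$. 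With that substitution your argument is complete and coincides with the paper's.
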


Let $S$ and $T$ be endomorphisms of $\T^d$ such that no eigenvalue of $S$ or of $T$ is of modulus 1 or 0.  This means that $\mathfrak{s}_0 = \mathfrak{t}_0 = 0$ and  we have
\[
\R^d = \mathfrak{s}_- \oplus \mathfrak{s}_+ = \mathfrak{t}_- \oplus \mathfrak{t}_+.
\]
We also assume that each of the subspaces $\mathfrak{s}_-$ and $\mathfrak{t}_-$ are nontrivial, and that $\R^d$ is spanned by these subspaces.  We choose nontrivial subspaces $\mathfrak{s} \subset \mathfrak{s}_-$ and $\mathfrak{t} \subset \mathfrak{t}_-$ such that $\R^d = \mathfrak{s} \oplus \mathfrak{t}$.  Define $\pi_{\mathfrak{s}}: \R^d \to \mathfrak{s}$ to be the projection parallel to $\mathfrak{t}$ and similarly define $\pi_{\mathfrak{t}}: \R^d \to \mathfrak{t}$ to be the projection parallel to $\mathfrak{s}$.  In this section, there is no harm in identifying $\mathfrak{s}$ and $\mathfrak{t}$  and their elements with their images in $\T^d$.

The following lemma is a key observation for the proof of this special case.
\begin{lemma}
Let $x\in\T^d$.  Then for any $y\in \mathfrak{s}$, the orbits of the points $x$ and $x+y$ under $S$ are both dense in $\T^d$ or both not dense in $\T^d$.  Furthermore, if one of the orbits equidistributes, then both do.  An analogous statement holds for $y\in\mathfrak{t}$ and $T$.
\end{lemma}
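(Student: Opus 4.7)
The guiding observation is that $\mathfrak{s}$ is contained in the contracting subspace $\mathfrak{s}_-$, so for any $y \in \mathfrak{s}$ one has $\|S^n y\|_{\R^d} \to 0$ exponentially, and consequently the induced distance $\dist(S^n y, 0)$ on $\T^d$ also tends to $0$. Writing $S^n(x+y) = S^n x + S^n y$ on $\T^d$ shows that the two orbits are asymptotic, and both assertions of the lemma will follow from this asymptotic closeness by standard approximation arguments.

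For the density statement I will argue as follows. Suppose $\{S^n x\}_{n \geq 0}$ is dense in $\T^d$. Because $S$ is a nonsingular integer matrix, $S^N : \T^d \to \T^d$ is surjective for every $N$, so the tail $\{S^n x\}_{n \geq N}$ is also dense in $\T^d$ (its closure contains $S^N(\T^d) = \T^d$). Given $z \in \T^d$ and a neighborhood $U$ of $z$, choose $\e > 0$ with $B(z,2\e) \subset U$, pick $N$ large enough that $\dist(S^n y, 0) < \e$ for all $n \geq N$, and finally choose $n \geq N$ with $S^n x \in B(z,\e)$. Then $S^n(x+y) = S^n x + S^n y \in B(z, 2\e) \subset U$, proving density of the orbit of $x+y$. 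The converse follows by the same argument with $y$ replaced by $-y \in \mathfrak{s}$.

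For the equidistribution statement, fix $f \in C(\T^d)$ and $\e > 0$. By uniform continuity, choose $\delta > 0$ with $|f(u)-f(v)| < \e$ whenever $\dist(u,v) < \delta$, and then choose $M$ with $\dist(S^n y, 0) < \delta$ for all $n \geq M$. Comparing Birkhoff sums term by term gives
\[
\left|\frac{1}{N}\sum_{n=0}^{N-1}\bigl[f(S^n(x+y)) - f(S^n x)\bigr]\right| \leq \frac{2M\|f\|_\infty}{N} + \e,
\]
whose $\limsup$ is at most $\e$. Since $\e$ is arbitrary, the two ergodic averages have the same limit, so if $x$ equidistributes then so does $x+y$; reversing the roles of $y$ and $-y$ gives the converse.

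The whole argument is routine; the only detail needing care is the passage from density of $\{S^n x\}_{n\geq 0}$ to density of an arbitrary tail, which is precisely why one uses that $S$ is surjective on $\T^d$ (guaranteed by nonsingularity). The statement for $\mathfrak{t}$ and $T$ is identical, since $\mathfrak{t} \subset \mathfrak{t}_-$ is contracted by $T$.
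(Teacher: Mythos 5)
Your proof is correct and takes the same approach as the paper's, which simply notes that $\|S^n(x) - S^n(x+y)\| = \|S^n y\| \to 0$ and asserts the lemma ``quickly follows.'' You have spelled out the routine details the paper omits --- uniform continuity for comparing Birkhoff averages, and, for the density claim, the observation that the tail $\{S^n x\}_{n\ge N}$ of a dense orbit is dense because $S^N$ is a continuous surjection of the compact torus --- which is the one point where a little care is actually needed.
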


\begin{proof}
Since $y$ is contracted by $S$ we have that
\[
\Vert S^n (x) - S^n (x+y) \Vert = \Vert S^n(y) \Vert \to 0 \text{ as } n\to \infty.
\]
Thus the tails of the orbits of $x$ and of $x+y$ are nearing identical, and so the lemma quickly follows. 
\end{proof}

This means that for any $x\in \T^d$, every point of the leaf $x + \mathfrak{s}$ has the same behavior asymptotically, either nondense or dense (and perhaps equidistributing).  Therefore $Eq(S)$ can be written as the disjoint union of sets of the form $x+\mathfrak{s}$ where $x$ ranges over $\mathfrak{t} \cap Eq(s) = \pi_{\mathfrak{t}}(Eq(S))$.  Similarly, we have a disintegration of $D(S)$ into the sets $x + \mathfrak{s}$ for $x\in \mathfrak{t} \cap D(S) = \pi_{\mathfrak{t}}(D(S))$, and a disintegration of $ND(T)$ into the sets $x+\mathfrak{t}$ for $x\in\mathfrak{s} \cap ND(T) = \pi_{\mathfrak{s}}(ND(T))$.  This type of foliated structure leads to the next lemma.

\begin{lemma}\label{lem:FullMeas}
For almost every $x\in \mathfrak{t}$, we have that $x$ is in $Eq(S)$.  In other words, $Eq(S)\cap\mathfrak{t}$ is a full measure set within $\mathfrak{t}$ (with respect to Lebesgue measure on $\mathfrak{t}$).  An analogous statement holds for $D(S)$.
\end{lemma}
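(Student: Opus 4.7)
The plan is to combine the $\mathfrak{s}$-invariance of $Eq(S)$ furnished by the previous lemma with Fubini's theorem applied to the direct sum $\R^d = \mathfrak{s} \oplus \mathfrak{t}$. Heuristically, because $Eq(S)$ has full Haar measure on $\T^d$ and is saturated by translates of $\mathfrak{s}$, its complement is a null set made up of entire $\mathfrak{s}$-leaves; Fubini then forces the ``base'' of this complement, namely its intersection with $\mathfrak{t}$, to be Lebesgue-null in $\mathfrak{t}$.

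Concretely, I would first lift to $\R^d$. Let $\pi\colon \R^d \to \T^d$ be the covering projection and set $\tilde E := \pi^{-1}(Eq(S))$. Since $m(Eq(S)) = 1$, the set $\tilde E$ is $\Z^d$-invariant and Lebesgue-conull in every bounded open set of $\R^d$. The previous lemma says $Eq(S) + y = Eq(S)$ on $\T^d$ for every $y \in \mathfrak{s}$, so the lifted set $\tilde E$ is invariant under the additive action of $\mathfrak{s}$ on $\R^d$: every $\mathfrak{s}$-leaf either lies entirely inside $\tilde E$ or is disjoint from it.

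Next, I would apply Fubini in the coordinates $(u,v) \mapsto u+v$ on $\mathfrak{s} \times \mathfrak{t} \to \R^d$, which is a linear bijection and hence sends Lebesgue measure to a constant multiple of Lebesgue measure on $\R^d$. Fix bounded boxes $B \subset \mathfrak{t}$ and $B' \subset \mathfrak{s}$, and consider the region $K = B' + B \subset \R^d$. Because $\tilde E \cap K$ is Lebesgue-conull in $K$, Fubini yields, for almost every $u \in B'$, that the slice $\tilde E_u := \{v \in B : u+v \in \tilde E\}$ is Lebesgue-conull in $B$. But the $\mathfrak{s}$-invariance of $\tilde E$ means that $v \in \tilde E$ if and only if $u+v \in \tilde E$, so every such slice coincides with $\tilde E \cap B$. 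Hence $\tilde E \cap B$ is Lebesgue-conull in $B$ for every bounded $B \subset \mathfrak{t}$, proving that $Eq(S) \cap \mathfrak{t}$ has full Lebesgue measure in $\mathfrak{t}$.

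The statement for $D(S)$ follows by the identical argument, using the density assertion of the previous lemma in place of the equidistribution assertion. I do not anticipate any serious obstacle here: the content is already in the previous lemma plus standard Fubini, and the only technical care needed is restricting to bounded regions in order to invoke Fubini on the noncompact $\R^d$.
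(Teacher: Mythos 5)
Your proposal is correct and follows essentially the same route as the paper: combining the $\mathfrak{s}$-saturation of $Eq(S)$ furnished by the previous lemma with Fubini's theorem applied to the direct sum $\R^d = \mathfrak{s}\oplus\mathfrak{t}$. The only difference is that you lift to $\R^d$ and run Fubini over bounded boxes (which is a sensible technical refinement, since $\mathfrak{s}$ and $\mathfrak{t}$ need not descend to closed subgroups of the torus), whereas the paper applies Fubini directly and somewhat informally on $\T^d$; the underlying strategy is the same.
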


\begin{proof}
As shown in the previous section, $Eq(S)$ is a full measure set, and each line parallel to $\mathfrak{s}$ is either completely contained in $Eq(S)$ or disjoint from it. Moreover every such line is transverse to $\mathfrak{t}$.  More formally, this is an application of the theorem of Fubini: Using the equalities
\[
1 = m(Eq(S)) = \int_{x\in\mathfrak{t}}\int_{x+\mathfrak{s}} \chi_{Eq(S)} = \int_{x\in\mathfrak{t} \cap Eq(s)} 1,
\]
we can conclude that $\mathfrak{t} \cap Eq(S)$ has full measure in $\mathfrak{t}$.

The same argument holds for $D(S)$.
\end{proof}

We would also like to say something about the size of the set $\mathfrak{s} \cap ND(T)$.  Since $ND(T)$ is a measure zero set, the same arguments do not apply.  We will instead have to appeal to the fact that it is a winning set and use the following lemma.

\begin{lemma}\label{lem:WinProj}
Let $(X,\dist)$ be a complete metric space with a subset $U\subset X$ admitting a product structure $U = V\times W$ (meaning $\dist$ restricted to $U$ is the maximum of the distance on $V$ and the distance on $W$).  Denote by $\pi_V$ the projection from $U$ to $V$.  Suppose that $A\subset V$ is such that $\pi_V^{-1}(A)$ is an $\alpha$-winning set in $U$.  Then $A$ is an $\alpha$-winning set in $V$.
\end{lemma}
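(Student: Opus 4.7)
The plan is to run the $V$-game and simulate it inside the $U$-game, using Alice's winning strategy for $\pi_V^{-1}(A) \subset U$. Since the metric on $U = V \times W$ is the maximum metric, a ball in $U$ has the product form
\[
B_U((v,w),r) = B_V(v,r) \times B_W(w,r),
\]
which is the basic identity we will exploit to pass between the two games.

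Fix $\beta > 0$ and an arbitrary reference point $w_* \in W$. I would describe Alice's strategy in the $V$-game round by round. When Bob opens with a ball $B_0^V = B_V(x_0,\rho)$ in $V$, interpret this as Bob having played $B_0^U := B_V(x_0,\rho) \times B_W(w_*, \rho)$ in the $U$-game. Alice's winning strategy for $\pi_V^{-1}(A)$ in $U$ responds with some ball $A_1^U = B_V(y_1,\alpha\rho) \times B_W(w_1', \alpha\rho) \subset B_0^U$; Alice then plays $A_1^V := B_V(y_1, \alpha\rho)$ in $V$. Note $A_1^V \subset B_0^V$ because the $V$-projection of $A_1^U$ lies inside the $V$-projection of $B_0^U$.

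Inductively, suppose we have produced a play $B_0^U \supset A_1^U \supset B_1^U \supset \cdots \supset A_n^U$ in the $U$-game following Alice's winning strategy, where each $A_k^U$ has the form $B_V(y_k, r_k\alpha) \times B_W(w_k', r_k\alpha)$ with $r_k = \rho(\alpha\beta)^{k-1}$, and where Alice's move in $V$ at step $k$ has been $A_k^V := B_V(y_k, r_k\alpha)$. When Bob responds in $V$ with $B_n^V = B_V(x_n, \alpha\beta r_n) \subset A_n^V$, we lift this to the $U$-move $B_n^U := B_V(x_n,\alpha\beta r_n) \times B_W(w_n', \alpha\beta r_n)$; by the product-ball identity and the fact that $B_W(w_n', \alpha\beta r_n) \subset B_W(w_n', \alpha r_n)$, we have $B_n^U \subset A_n^U$, so this is a legal move for Bob in the $U$-game. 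Alice's $U$-strategy then produces $A_{n+1}^U = B_V(y_{n+1},\alpha r_{n+1}) \times B_W(w_{n+1}', \alpha r_{n+1}) \subset B_n^U$, and we define Alice's $V$-move as $A_{n+1}^V := B_V(y_{n+1}, \alpha r_{n+1})$.

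The resulting sequence of nested $U$-balls is a legal play in which Alice follows her winning strategy, so the unique limit point $(v_\infty, w_\infty) = \bigcap_n B_n^U$ lies in $\pi_V^{-1}(A)$, i.e.\ $v_\infty \in A$. Projecting to $V$, one has $\{v_\infty\} = \bigcap_n B_n^V$, so Alice has won the $V$-game, and her strategy depended only on $\beta$ (and the fixed choice of $w_*$) but not on Bob's actual moves. Hence $A$ is $\alpha$-winning in $V$. The only subtlety to check carefully is the bookkeeping of the $W$-coordinate centers $w_n'$, ensuring that each lifted ball for Bob sits inside the previous Alice ball; this is where the max-metric product structure is essential, since it forces containment in the $V$-factor and the $W$-factor to be decoupled.
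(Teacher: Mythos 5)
Your proof is correct and follows the same simulate-in-$U$-then-project-to-$V$ strategy that the paper uses. In fact you are slightly more careful than the paper's own write-up: the paper asserts that $\pi_V^{-1}(B(x_n,\rho_n))$ is a metric ball in $U$, which under the max metric is only literally true if $W$ itself is a ball of radius $\rho_n$, whereas your explicit bookkeeping of the $W$-coordinate centers $w_n'$, lifting each of Bob's $V$-moves to the product ball $B_V(x_n,\alpha\beta r_n)\times B_W(w_n',\alpha\beta r_n)$, makes the simulation precise and ensures the lifted moves are legal nested $U$-balls regardless of the size of $W$.
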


\begin{proof}
Since $\pi_V^{-1}(A)$ is $(\alpha,\beta)$-winning for all $\beta>0$, Alice will simply use the strategy for winning the $(\alpha,\beta)$-game on $U$ to win the $(\alpha,\beta)$-game on $V$ by projecting her moves using $\pi_V$ to $V$.  Explicitly, suppose in round $n$ of an $(\alpha,\beta)$-game on $V$ Bob has chosen the ball $B(x_n,\rho_n)$.  Alice must choose a subball $A_{n+1} = B(y_{n+1}, \alpha\rho_n) \subset B(x_n, \rho_n)$.  She does this by considering $\pi^{-1}_V(B(x_n,\rho_n))$, which is a metric ball in $U$, employing the known winning strategy there to find $A_{n+1}' \subset \pi^{-1}_V(B(x_n,\rho_n))$, and setting $A_{n+1} = \pi_V(A'_{n+1})$.  Since the end point of the $(\alpha,\beta)$-game on $U$ is in $\pi_v^{-1} (A)$, the end point of the corresponding $(\alpha,\beta)$-game on $V$ is in $A$.
\end{proof}

This lemma leads to the following analogue of Lemma~\ref{lem:FullMeas} for $ND(T)$ and $\mathfrak{s}$.  

\begin{lemma}
The set $ND(T) \cap \mathfrak{s} = \pi_{\mathfrak{s}}(ND(T))$ is a 1/2-winning set in $\mathfrak{s}$.
\end{lemma}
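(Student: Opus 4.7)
The plan is to apply Lemma~\ref{lem:WinProj} with $V = \mathfrak{s}$, $W = \mathfrak{t}$, and target set $A = ND(T)\cap\mathfrak{s}$. Two ingredients set this up: by the corollary to Theorem~\ref{thm:BFK}, the full set $ND(T)$ is $1/2$-winning in $\T^d$, and by the first lemma of this section applied to $T$ and the contracted subspace $\mathfrak{t}\subset\mathfrak{t}_-$, the set $ND(T)$ is invariant under translations by elements of $\mathfrak{t}$, so in any local product chart
\[
\pi_{\mathfrak{s}}^{-1}\bigl(ND(T)\cap\mathfrak{s}\bigr) = ND(T).
\]

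To make this rigorous I would choose bases adapted to the splitting $\R^d = \mathfrak{s}\oplus\mathfrak{t}$, giving $\R^d$ the associated product max norm, and work in a small ball $U\subset\T^d$ that lifts isometrically into $\R^d$. Writing $U = V\times W$ with $V\subset\mathfrak{s}$ and $W\subset\mathfrak{t}$, the $\mathfrak{t}$-saturation of $ND(T)$ gives $\pi_V^{-1}(ND(T)\cap V) = ND(T)\cap U$, which is $1/2$-winning in $U$ because Alice's winning strategy for $ND(T)$ on $\T^d$ stays inside Bob's initial ball by construction. Lemma~\ref{lem:WinProj} then yields that $ND(T)\cap V$ is $1/2$-winning in $V$, and because the argument works in a chart around every point the conclusion is that $ND(T)\cap\mathfrak{s}$ is $1/2$-winning in $\mathfrak{s}$.

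The step I anticipate as the main technical hurdle is the reconciliation of norms. Theorem~\ref{thm:BFK} is stated using the ambient max norm on $\R^d$, but Lemma~\ref{lem:WinProj} requires a genuine product metric on $U = V\times W$, and these two norms coincide only when $\mathfrak{s}$ and $\mathfrak{t}$ happen to be coordinate subspaces. In general they are merely bi-Lipschitz equivalent, and one must verify that the constant $1/2$ survives the change of norm. This can be handled either by rerunning the BFK construction in the adapted basis (the strategy there really depends on the $T$-action on the lattice and not on the chosen norm) or by absorbing the Lipschitz factor into the parameter $\beta$ of Schmidt's game, which leaves the $\alpha = 1/2$ side untouched.
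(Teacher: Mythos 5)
Your proposal is correct and takes essentially the same approach as the paper: it applies Lemma~\ref{lem:WinProj} with $V = \mathfrak{s}$, $W = \mathfrak{t}$ together with Theorem~\ref{thm:BFK}, using the $\mathfrak{t}$-saturation of $ND(T)$ to see that $\pi_{\mathfrak{s}}^{-1}(ND(T)\cap\mathfrak{s}) = ND(T)$. The norm-compatibility issue you flag is a real subtlety that the paper's one-line proof passes over silently, and your instinct to repair it by rerunning the Broderick--Fishman--Kleinbock construction in a basis adapted to the splitting $\mathfrak{s}\oplus\mathfrak{t}$ is the sound route; be cautious, though, about the alternative of ``absorbing the Lipschitz factor into $\beta$,'' since a bi-Lipschitz change of metric does not in general preserve the specific value $\alpha=1/2$, only the property of being $\alpha$-winning for some $\alpha>0$.
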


\begin{proof}
This is an application of Lemma~\ref{lem:WinProj} with the result of Theorem~\ref{thm:BFK}.
\end{proof}

For the next step, we compute the Hausdorff dimension of $Eq(S) \cap ND(T)$.  For this we use Kleinbock and Margulis' version of the \linebreak Marstrand slicing theorem \cite{KM}:

\begin{theorem}\label{thm:MarSlice}
Let $M_1$ and $M_2$ be Riemannian manifolds, $A \subset M_1$, $B \subset M_1 \times M_2$. Denote by $B_a$ the intersection of $B$ with $\{a\} \times M_2$ and assume that $B_a$ is nonempty for all $a \in A$. Then
$$\dim (B) \geq \dim (A) + \inf_{a \in A} \dim (B_a).$$
\end{theorem}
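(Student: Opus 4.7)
The plan is to produce a Borel measure $\lambda$ on $B$ whose local behavior witnesses the claimed dimension bound through the mass distribution principle. Fix any $s < \dim A$ and any $t < \inf_{a \in A}\dim B_a$; it suffices to show $\dim B \geq s + t$, since letting $s$ and $t$ tend to their suprema yields the theorem. Working locally in Riemannian charts, I may assume $M_1$ and $M_2$ are open subsets of Euclidean spaces and equip $M_1 \times M_2$ with the sup metric, so that a metric ball in the product is simply a product of balls in the factors.

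By Frostman's lemma there is a compactly supported Borel probability measure $\mu$ on $A$ with $\mu(B(a,r)) \leq C_1 r^s$ uniformly in $(a,r)$. For each $a \in A$, Frostman's lemma applied to the slice $B_a$ produces a Borel probability measure $\nu_a$ on $B_a$ with $\nu_a(B(b,r)) \leq C_a r^t$. I would then assemble the fibered measure
\[
\lambda(E) \;=\; \int_A \nu_a(E\cap B_a)\,d\mu(a)
\]
on $B$. For a product ball $B((a_0,b_0),r) = B(a_0,r)\times B(b_0,r)$, the sup-metric structure yields
\[
\lambda(B((a_0,b_0),r)) \;=\; \int_{B(a_0,r)}\nu_a(B(b_0,r))\,d\mu(a) \;\leq\; C\,r^{s+t},
\]
provided that the constants $C_a$ are bounded uniformly in $a$. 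The mass distribution principle then gives $\dim B \geq s + t$, which is what we want.

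The main obstacle is to make this construction rigorous: the family $a \mapsto \nu_a$ must be chosen in a Borel-measurable way so that $\lambda$ makes sense, and the Frostman constant $C_a$ must be controlled uniformly in $a$ so that the final estimate has a single constant $C$. I would handle this by first restricting attention to a Borel subset $A' \subset A$ of $\mu$-measure arbitrarily close to $1$ on which the Frostman constants are uniformly bounded --- an Egorov/Lusin-type argument that is available because the relevant $t$-dimensional Frostman content depends Borel-measurably on the slice --- and then invoking a measurable selection theorem such as Kuratowski--Ryll-Nardzewski to extract $\nu_a$ in a Borel manner. Restricting $\mu$ to $A'$ preserves the $s$-Frostman condition on $A$, so the product estimate above runs through with a uniform constant and completes the argument.
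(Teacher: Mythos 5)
Note first that the paper does not prove this statement: it is quoted verbatim from Kleinbock and Margulis \cite{KM} and used as a black box, so there is no internal argument for you to be compared against.

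Your strategy --- a Frostman measure $\mu$ on $A$, Frostman measures $\nu_a$ on the slices, the fibered measure $\lambda = \int \nu_a\,d\mu(a)$, and the mass distribution principle --- is a legitimate route to the slicing inequality and the ball estimate $\lambda(B((a_0,b_0),r)) \le C r^{s+t}$ is set up correctly. The issues lie in the two points you flag, and I don't think your sketch resolves them. The assertion that ``the $t$-dimensional Frostman content depends Borel-measurably on the slice'' is the crux and is not obviously true: it is reasonable when $B$ is $\sigma$-compact (then $a \mapsto B_a$ is a Borel map into the hyperspace of compacta and $\mathcal{H}^t_\infty$ is upper semicontinuous there), but for a general Borel $B$ this requires a genuine descriptive-set-theoretic argument, and Kuratowski--Ryll-Nardzewski does nothing until that measurability is established. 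Moreover, Frostman's lemma applies to Souslin sets, while the theorem as stated imposes no regularity on $A$ or $B$; your proof therefore only yields the Souslin case (which does suffice for the applications in this paper, where $Eq(S)$ and $ND(T)$ are Borel, but is strictly weaker than the statement). On the other point, the Egorov/Lusin machinery is unnecessary: normalize so that $\nu_a(B(b,r)) \le r^t$ always, letting the total mass $m_a = \nu_a(B_a) > 0$ vary instead of the constant. Then $\lambda(B((a_0,b_0),r)) \le r^t\,\mu(B(a_0,r)) \le C_1 r^{s+t}$ holds with no uniformity across fibers, and all one needs is $\int m_a\,d\mu(a) > 0$, which follows from $m_a > 0$ pointwise once $a \mapsto m_a$ is measurable --- so everything again reduces to the single measurability question you have not settled. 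For a proof that sidesteps fiberwise measure selection entirely (and handles arbitrary $A$ and $B$), one can argue with Hausdorff contents: pick $\rho(a) > 0$ with $\mathcal{H}^t_\rho(B_a) > 1$ for $\rho < \rho(a)$, decompose $A = \bigcup_n A_n$ with $A_n = \{a : \rho(a) > 1/n\}$ (countable stability of Hausdorff dimension needs no measurability), choose $n$ with $\dim A_n > s$, and compare a $\delta$-cover of $B$ against the resulting weighted cover of $A_n$; Frostman is then invoked once, on the base alone. As it stands your proposal has the right shape but a real gap at the measurable-selection step.
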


Now we have all ingredients and are ready to prove Theorem \ref{Thm1}:

\begin{proof}[Proof of Theorem \ref{Thm1}]
For this theorem we give two proofs.  Both are needed to prove each of the corollaries.

Version 1:  Define $A = \mathfrak{t} \cap Eq(S)$.  This is a full measure set in $\mathfrak{t}$ (with respect to Lebesgue measure on $\mathfrak{t}$), and therefore has full Hausdorff dimension, that is, $\dim(A) = \dim(\mathfrak{t})$.  For every $a\in A$, define $B_a = (a + \mathfrak{s}) \cap ND(T)$.  Notice then that $B= \bigcup_{a\in A}B_a$ is equal to $Eq(s)\cap ND(T)$.  Due to the foliated structure of $ND(T)$, we have that $a+\pi_{\mathfrak{s}}(ND(T)) = B_a$.  Since the projection (and translate) of a 1/2-winning set is still 1/2-winning by Lemma~\ref{lem:WinProj}, $B_a$ is a winning subset of $a+\mathfrak{s}$ and so $\dim(B_a) = \dim(a+\mathfrak{s}) = \dim(\mathfrak{s})$.  Now we apply the above theorem to conclude
\[
\dim(Eq(S) \cap ND(T)) = \dim(B) \geq \dim(\mathfrak{t}) + \dim(\mathfrak{s}) = n.
\]
Since the left side of the inequality is bounded above by $n$, we have equality and our theorem is proven.

Version 2:  This time we reverse the roles of $\mathfrak{s}$ and $\mathfrak{t}$, and we define $A= \mathfrak{s} \cap ND(T)$.  This we have shown to be a 1/2-winning set in the subspace $\mathfrak{s}$ and therefore $\dim(A) = \dim(\mathfrak{s})$.  For every $a\in A$ define $B_a = (a+\mathfrak{t})\cap Eq(S)$.  Due to the foliated structure of $Eq(S)$, we have that $a+\pi_{\mathfrak{t}}(Eq(S)) = B_a$.  By Fubini's theorem, we know that $B_a$ has full measure within $a+\mathfrak{t}$, and so $\dim(B_a) = \dim(a+\mathfrak{t}) = \dim(\mathfrak{t})$.  Again we apply Theorem~\ref{thm:MarSlice} to conclude
\[
\dim(Eq(S) \cap ND(T)) = \dim(B) \geq \dim(\mathfrak{t}) + \dim(\mathfrak{s}) = n,
\]
where $B= \bigcup_{a\in A}B_a$ as before. Thus, the theorem is proven again.
\end{proof}

\begin{proof}[Proof of Corollary~\ref{Cor1}]
We extend the first version of the proof of Theorem~\ref{Thm1} to a proof of the corollary. Since we have more than two endomorphisms, we must be more flexible in the choice of the spaces $\mathfrak{s}$ and $\mathfrak{t}$. We define $\mathfrak{s} = \mathfrak{s}_-$.  Note that $\mathfrak{s} \neq \R^d$ because $S$ maps $\Z^d$ to itself and cannot contract in all directions simultaneously.  Now we choose an arbitrary subspace $\mathfrak{t}$ such that $\mathfrak{s} \oplus \mathfrak{t} = \R^d$. With this choice of $\mathfrak{t}$ we then take the same definition of the set $A$, that is $A= \mathfrak{t} \cap Eq(S)$.   For each $T_k$ we choose a subspace $\mathfrak{t}^k \subseteq \mathfrak{t}^k_-$ such that $\mathfrak{s} \oplus \mathfrak{t}^k = \R^d$, and for every $a\in A$, set $B_a^k = (a+\mathfrak{s}) \cap ND(T_k)$.  Using the subspaces $\mathfrak{t}_k$, we see as before that each the $B_a^k$ are $1/2$-winning.  Hence, $\bigcap_kB_a^k$ is $1/2$-winning as well.  We proceed in the proof by applying the Marstrand slicing theorem to the sets $A$ and $\bigcap_kB_a^k$.  
\end{proof}

\begin{proof}[Proof of Corollary~\ref{Cor2}]
 For the second corollary we extend the second version of the proof of Theorem~\ref{Thm2} in an analogous way:
We define $\mathfrak{t} = \mathfrak{t}_-$. Then we choose an arbitrary subspace $\mathfrak{s}$ such that we have $\mathfrak{s} \oplus \mathfrak{t} = \R^d$ and we take the same definition of the set $A$, this time with a different choice of $\mathfrak{s}$. 
We replace the sets $B_a$ with $\bigcap_k B_a^k$ where $B_a^k = (a+\mathfrak{t})\cap Eq(S_k)$.
Using the analogously constructed subspaces $\mathfrak{s}^k$  each of the sets $B_a^k$ is of full measure, whence $\bigcap_k B_a^k$ is of full measure and of full Hausdorff dimension.  The rest of the proof follows as above.  
\end{proof}

\section{Quasihyperbolic toral endomorphisms}

In this section we develop the ideas introduced in the previous section to extend the result to certain quasihyperbolic endomorphisms of the torus.  Let $S$ and $T$ be endomorphisms of $\T^d$ such that no eigenvalue of either is zero or a root of unity.  Now we have the decomposition
\[
\R^d = \mathfrak{s}_- \oplus \mathfrak{s}_0 \oplus \mathfrak{s}_+ = \mathfrak{t}_- \oplus \mathfrak{t}_0 \oplus \mathfrak{t}_+
\]
where $\mathfrak{s}_-$ is the subspace corresponding to the eigenvalues of $S$ of modulus less than 1, $\mathfrak{s}_0$ is the subspace corresponding to eigenvalues of $S$ of modulus equal to 1, and $\mathfrak{s}_+$ to those of modulus greater than 1, and similarly for $\mathfrak{t}_-, \mathfrak{t}_0, \mathfrak{t}_+$ and $T$.  Again we assume that neither of $\mathfrak{s}_-$ nor $\mathfrak{t}_-$ are trivial.  In the previous section we assumed that $\mathfrak{s}_-$ and $\mathfrak{t}_-$ spanned $\R^d$, but in this section our assumption is more complicated.  Namely, we require that the subspaces $\mathfrak{s}_-\oplus \mathfrak{s}_0'$ and $\mathfrak{t}_-\oplus \mathfrak{t}_0'$ span $\R^d$ where $\mathfrak{s}_0'$ and $\mathfrak{t}_0'$ are the sums of eigenspaces of eigenvalues of modulus 1 for $S$ and $T$, respectively.  The nature of this restriction will become apparent later in the section.  Choose nontrivial subspaces $\mathfrak{s} \subset \mathfrak{s}_-\oplus \mathfrak{s}_0'$ and $\mathfrak{t} \subset \mathfrak{t}_-\oplus \mathfrak{t}_0'$ such that 
\begin{equation*}\label{Eqn:Decomp}
\R^d = \mathfrak{s} \oplus \mathfrak{t}.
\end{equation*}

As in the previous section, we want to make use of the Marstrand slicing theorem.  In order to do this, we must demonstrate that sets of the form $x + \mathfrak{s}$ are contained entirely in or are disjoint from $Eq(S)$, and similarly for $x + \mathfrak{t}$ and $ND(T)$.  For this we need an analogue of Lemma 3.1 which states that for any $x\in \T^d$ and $y\in \mathfrak{s}$, the point $x+y$ has the same asymptotic behavior as $x$ (that is, either have nondense or equidistributed orbit) under $S$.  Let us first focus on the property of equidistribution.  Consider the action of $S$ on the space $\mathfrak{s}_0$.  As explained by Lind \cite{L} it can be realized as the action of the block matrix 
\[
R =
\begin{pmatrix}
J(R_1, n_1)& & \\
	& \ddots & \\
	&	& J(R_m, n_m)
\end{pmatrix}
\]
where each $J(R_i,n_i)$ is a Jordan block consisting of $n_i$ copies of the $2\times 2$ rotation matrix $R_i$, 
\[
J(R_i,n_i) = \begin{pmatrix} R_i & I& \\
						&   \ddots &\ddots \\
						&	& \ddots &I \\
						&            &     & R_i    \end{pmatrix}.
\]
Observe that when such a Jordan block is applied to a vector $y = (y_1, y_2, \dots, y_{n_i})^\top \in \R^{2n_i}$,
\[
J(R_i, n_i) y = \begin{pmatrix}
R_i y_1 + y_2\\
R_i y_2 + y_3 \\
\vdots \\
R_i y_{n-1} + y_n \\
R_i y_n
\end{pmatrix},
\]
there is drift or shearing in all but the last coordinates.  We are unable to control this drift and so cannot easily deduce the analogue of Lemma 3.1 for the full subspace $\mathfrak{s}_-\oplus\mathfrak{s}_0$.  This is why we instead insist on working with the subspace $\mathfrak{s} = \mathfrak{s}_- \oplus \mathfrak{s}_0'$.  On $\mathfrak{s}_0'$, the action of $S$ is realized by the matrix
\[
R = \begin{pmatrix}
R_1 & & \\
   & \ddots & \\
   &   & R_m
\end{pmatrix}.
\]
(When $\mathfrak{s}_0' =\mathfrak{s}_0$ $S$ is said to be \emph{central spin}, in which case the restriction of $S$ to $\mathfrak{s}_0$ is always of the latter form.)

Let us look in closer detail at the actions of $R$ and $S$ on $\mathfrak{s}_0' \simeq \R^{2m}$ and $\T^d$, respectively.   In particular, let us determine the spectral types of $R$ and $S$.  Recall that for a unitary action $U$ on a separable Hilbert space $\mathcal{H}$ there is a decomposition $\mathcal{H} = \bigoplus_i Z(x_i)$ of $\mathcal{H}$ into cyclic subspaces along with a sequence of measures $\sigma_{x_1} \gg \sigma_{x_2} \gg \cdots$ on the unit circle such that Fourier transforms of these measures are defined by $\widehat{\sigma}_{x_i}(n) = \langle U^n x_i, x_i\rangle$.  These measures are called the spectral measures of $x_i$ with respect to $U$, and the measure class of $\sigma_{x_1}$ is called the spectral type of $U$.  (For further details on spectral types we refer to Glasner \cite{G}.)  

Now $R$ is not an integer matrix and so does not act on $\T^d$ directly.  However $R$ does act by rotation on each factor of $\R^2 \oplus \cdots \oplus \R^2 = \R^{2m}$ and can be thought of as irrational rotation by a vector ${\bf \alpha}$ on $\T^{m}$.  This gives rise to a unitary action $U_R$ on $L_0^2(\T^m)$ with $U_R(f) = f \circ R$.  It is well known that the nontrivial  characters of $\T^m$ , namely $f_{\bf j}(x) = e^{2\pi i \bf j \cdot x}$ for $\bf j \in \Z^m$ with ${\bf j} \neq 0$, span a dense subspace of $L_0^2(\T^m)$.  Now any character $f_{\bf j}$ is an eigenfunction of $U_R$ with eigenvalue $e^{2\pi i {\bf j} \cdot {\bf \alpha}}$.  Thus $R$ has purely discrete spectrum, and the spectral type of $R$ is an atomic measure.

On the other hand, $S$ has continuous spectral type.  Indeed, consider the spectral measure $\sigma_{\bf j}$ associated to any nontrivial character $f_{\bf j}$ for ${\bf j} \in \Z^d$ of $\T^d$ and the action $U_S$ on $L_0^2(\T^d)$ induced by $S$.  We may compute the Fourier transform of $\sigma_{\bf j}$ as 
\begin{align*}
	\widehat{\sigma}_{\bf j}(n) &= \langle U_S^n f_{\bf j}, f_{\bf j} \rangle\\
				& = \int_{\T^d} e^{2\pi i {\bf j} \cdot S^n x} e^{-2\pi i {\bf j} \cdot x } \\
				& =  \int_{\T^d} e^{2\pi i {\bf j} \cdot (S^n-\Id) x},
\end{align*}
where $\Id$ is the $d\times d$ identity matrix.  
For $n = 0$, we see that $\widehat{\sigma}_{\bf j} ( n) = 1$.  Now take $n>0$. Observe that $S^n-\Id$, by assumption, has trivial kernel.  Thus, ${\bf j} \cdot (S^n - \Id) x = j_1 n_1 x_1 + \dots + j_d n_d x_d$ for some integers $n_i$ with $n_1 x_1 + \dots + n_d x_d$ nonzero for all nonzero $x$.  Clearly, $j_1 n_1 x_1 + \dots + j_d n_d x_d$ is identically zero only when ${\bf j} = 0$, a situation we have excluded.  Therefore, for $n>0$, $\widehat{\sigma}_{\bf j }(n) = 1$ if $n=0$ and $\widehat{\sigma}_{\bf j }(n) = 0$ otherwise.  Hence $\sigma_{\bf j}$ is the Lebesgue measure, and $S$ has continuous spectral type.  These distinct spectral types of $S$ and $R$ will be an important ingredient in the next lemma. As a result of our discussion above we know that the spectral types of $S$ and $R$ are mutually singular.

We remark here that the action associated to the restriction of $S$ to the sum $\mathfrak{s}_0$ of the generalized eigenspaces with eigenvalues of modulus 1 has mixed spectrum, yet still has Lebesgue spectral type (in the case that $S$ is not central spin), and so is not, in this way, distinguishable from the full action of $S$ on $\T^d$.  

We will use one more tool to prove the next lemma and that is the structure of joinings of measure preserving systems.  Let $X$ and $Y$ be measure spaces with probability measures $\mu$ and $\nu$, respectively.  Suppose $A$ is a measure preserving and ergodic transformation of $X$ and that $B$ is a measure preserving and ergodic transformation of $Y$.  Then we may consider the product space $X\times Y$ with the product action $A \times B$.  Denote by $\pi_X: X\times Y \to X$ and $\pi_Y: X\times Y \to Y$ the canonical projections.  A joining of $X$ and $Y$ is an $A\times B$-invariant probability measure $\lambda$ on $X\times Y$ with the property that $\pi^*_X \lambda = \mu $ and $\pi^*_Y \lambda = \nu $.  The product measure $\mu \times \nu$ is clearly a joining of $X$ and $Y$,  however there may be others. If $\mu \times \nu$ is the only joining of $X$ and $Y$, then $X$ and $Y$ are called disjoint.  The following lemma follows almost immediately from the definitions.

\begin{lemma}\label{Lem:DisjointEqui}
Given two measure preserving and ergodic probability spaces $(X, A, \mu)$ and $(Y, B, \nu)$ which are disjoint and two points $x\in X$ and $y \in Y$ which equidistribute under $A$ and $B$, respectively, then $(x,y)$ equidistributes  in $X\times Y$ under  $A \times B$ with respect to the product measure $\mu \times \nu$.  
\end{lemma}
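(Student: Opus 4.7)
The plan is to show directly from the definitions that every weak-$*$ subsequential limit of the empirical measures of the point $(x,y)$ under $A\times B$ must coincide with the product measure $\mu\times\nu$, and then invoke the standard fact that a precompact sequence with a unique limit point converges to that limit point.

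First I would introduce the empirical measures
\[
\lambda_N = \frac{1}{N}\sum_{n=0}^{N-1}\delta_{(A^n x,\, B^n y)}
\]
on $X\times Y$. Since we are working in the concrete setting where $X\times Y$ is compact (the only use we will make of this lemma is in the compact setting $\T^d\times\T^d$ or a quotient thereof), the sequence $\{\lambda_N\}$ is precompact in the weak-$*$ topology. Let $\lambda$ be any weak-$*$ accumulation point, say $\lambda_{N_k}\to\lambda$. A routine telescoping argument shows that for every $f\in C(X\times Y)$ one has
\[
\bigl|\langle f\circ (A\times B),\lambda_N\rangle - \langle f,\lambda_N\rangle\bigr| \le \tfrac{2}{N}\|f\|_\infty,
\]
so $\lambda$ is $(A\times B)$-invariant.

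Next I would verify that $\lambda$ is in fact a joining of $(X,A,\mu)$ and $(Y,B,\nu)$. For this one only has to compute the marginals: for any $g\in C(X)$,
\[
\int_{X\times Y}(g\circ\pi_X)\,d\lambda_N = \frac{1}{N}\sum_{n=0}^{N-1} g(A^n x) \xrightarrow[N\to\infty]{} \int_X g\,d\mu
\]
by the assumption that $x$ equidistributes under $A$ with respect to $\mu$. Hence $\pi_X^{*}\lambda=\mu$, and analogously $\pi_Y^{*}\lambda=\nu$ because $y$ equidistributes under $B$ with respect to $\nu$. Thus $\lambda$ is a joining, and the disjointness hypothesis forces $\lambda=\mu\times\nu$.

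Finally, since every weak-$*$ accumulation point of the precompact sequence $\{\lambda_N\}$ equals $\mu\times\nu$, the full sequence converges weak-$*$ to $\mu\times\nu$. Unpacking the definition, this says exactly that $(x,y)$ equidistributes in $X\times Y$ under $A\times B$ with respect to $\mu\times\nu$, which is the conclusion of the lemma. There is no real obstacle here: the content is entirely in unpacking the definition of joining and disjointness and combining it with the elementary fact that a precompact sequence of probability measures with a unique accumulation point converges; the author's remark that the lemma ``follows almost immediately from the definitions'' is accurate.
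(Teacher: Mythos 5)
Your proof is correct and follows essentially the same route as the paper: form the empirical measures $\lambda_N$, pass to a weak-$*$ accumulation point, check invariance and that the marginals are $\mu$ and $\nu$ (using the equidistribution of $x$ and $y$), invoke disjointness to identify the limit as $\mu\times\nu$, and conclude. You are slightly more explicit than the paper about precompactness and the ``unique accumulation point implies convergence'' step, but the argument is the same.
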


\begin{proof}
Define a probability measure $\lambda_N$ on $X \times Y$ as the average of point masses:
\[
\lambda_N = \frac{1}{N}\sum_{n=0}^{N-1} \delta_{(A^n x, B^n y)}.
\]
Let $\lambda$ be a weak* limit of the sequence $\lambda_N$.  Then $\lambda$ is $A\times B$-invariant from construction.  Let $f\in C_c(X)$, and define $F\in C_c(X\times Y)$ to be $F(x,y) = f(x)$.  Then
\begin{align*}
\int f d\pi^*_X\lambda &= \int F d\lambda = \lim_N \int F d\lambda_N \\
&= \lim_N \frac{1}{N}\sum_{n=0}^{N-1} F(A^n x, B^n y) = \lim_N \frac{1}{N} \sum_{n=0}^{N-1}f(A^nx)  = \int f d\mu,
\end{align*}
and so $\pi^*_X\lambda = \mu$.  Similarly, $\pi^*_Y\lambda = \nu$.  Therefore $\lambda$ is a joining of $X$ and $Y$, and $\lambda = \mu \times \nu$, whence $(x,y)$ equidistributes under $A\times B$.  
\end{proof}

The relationship between spectral types and joinings that will be of use to us is the following proposition. For a proof, see \cite{G}:
\begin{proposition}
For two probability spaces $X$ and $Y$ with ergodic actions $A$ and $B$, respectively, if the spectral types of the corresponding $L_0^2$-actions are mutually singular, then $X$ and $Y$ are disjoint.
\end{proposition}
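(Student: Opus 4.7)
The plan is to associate to any joining $\lambda$ of $(X,A,\mu)$ and $(Y,B,\nu)$ a bounded linear operator $\Phi_\lambda \colon L^2(X,\mu)\to L^2(Y,\nu)$ that intertwines the Koopman operators $U_A$ and $U_B$, and then use the spectral hypothesis to show that $\Phi_\lambda$ must annihilate the zero-mean subspace $L_0^2(X,\mu)$. Concretely, define $\Phi_\lambda$ by requiring
\[
\langle \Phi_\lambda f,\, g\rangle_{L^2(Y,\nu)} \;=\; \int_{X\times Y} f(x)\,\overline{g(y)}\,d\lambda(x,y)
\]
for $f\in L^2(X,\mu)$, $g\in L^2(Y,\nu)$. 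Because the marginals of $\lambda$ are $\mu$ and $\nu$, Cauchy--Schwarz bounds this sesquilinear form by $\Vert f\Vert_2 \Vert g\Vert_2$, so $\Phi_\lambda$ is a well-defined contraction. A change of variables in the defining integral, using the $(A\times B)$-invariance of $\lambda$, yields the intertwining relation $\Phi_\lambda U_A = U_B \Phi_\lambda$; and the marginal condition forces $\Phi_\lambda 1 = 1$, so $\Phi_\lambda$ sends $L_0^2(X,\mu)$ into $L_0^2(Y,\nu)$.

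The heart of the argument is spectral: I would show that for each $f\in L_0^2(X,\mu)$, the spectral measure of $\Phi_\lambda f$ under $U_B$ is absolutely continuous with respect to the spectral measure $\sigma_f$ of $f$ under $U_A$. Using the intertwining property,
\[
\widehat{\sigma}_{\Phi_\lambda f}(n) \;=\; \langle U_B^n \Phi_\lambda f,\, \Phi_\lambda f\rangle \;=\; \langle \Phi_\lambda U_A^n f,\, \Phi_\lambda f\rangle,
\]
and a polarization/Cauchy--Schwarz argument (equivalently, passing to the polar decomposition $\Phi_\lambda = V|\Phi_\lambda|$ in which $|\Phi_\lambda|$ commutes with $U_A$) shows that the right-hand side is a positive-definite sequence pointwise dominated, after Herglotz, by a multiple of $\widehat{\sigma}_f$, so $\sigma_{\Phi_\lambda f}\ll \sigma_f$. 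But $\sigma_{\Phi_\lambda f}$ also lies in the spectral type of $U_B$ on $L_0^2(Y,\nu)$. By hypothesis these two spectral types are mutually singular, forcing $\sigma_{\Phi_\lambda f}=0$ and hence $\Phi_\lambda f = 0$.

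It remains to conclude $\lambda = \mu\times\nu$. For arbitrary $f\in L^2(X,\mu)$ and $g\in L^2(Y,\nu)$, decompose $f = \bar f + f_0$ and $g = \bar g + g_0$, with $\bar f = \int f\,d\mu$, $\bar g = \int g\,d\nu$ and $f_0, g_0$ of mean zero. Since $\Phi_\lambda f_0 = 0$ and $\Phi_\lambda 1 = 1$, the pairing $\int f\,\overline{g}\,d\lambda = \langle \Phi_\lambda f, g\rangle$ collapses to $\bar f\,\overline{\bar g}$, which is precisely $\int f\,\overline{g}\,d(\mu\times\nu)$. Since product functions are dense in $L^2(X\times Y,\mu\times\nu)$, this identifies $\lambda$ with $\mu\times\nu$, whence disjointness. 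The step I expect to be the main obstacle is the domination $\sigma_{\Phi_\lambda f}\ll \sigma_f$: while standard in the spectral theory of Markov operators, it has to be argued carefully via the polar decomposition and the Herglotz representation of nonnegative-definite sequences, rather than being a formal consequence of the intertwining alone.
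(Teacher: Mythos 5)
The paper gives no proof of this proposition; it simply cites Glasner's monograph. Your argument is the standard one that Glasner presents: associate to a joining $\lambda$ the Markov (relative-product) operator $\Phi_\lambda$, note it intertwines $U_A$ and $U_B$ and fixes constants, show $\sigma_{\Phi_\lambda f}\ll\sigma_f$, invoke mutual singularity to conclude $\Phi_\lambda$ kills $L_0^2$, and hence $\lambda=\mu\times\nu$. So the approach matches.

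One small imprecision worth fixing: in the spectral-domination step you speak of the sequence $\widehat{\sigma}_{\Phi_\lambda f}(n)$ being ``pointwise dominated'' by a multiple of $\widehat{\sigma}_f(n)$, but pointwise control of Fourier coefficients is not what yields absolute continuity. What one actually proves is domination at the level of measures: $\sigma_{\Phi_\lambda f}\le \Vert\Phi_\lambda\Vert^2\,\sigma_f$. The cleanest route is to observe that $\Phi_\lambda^*\Phi_\lambda$ commutes with $U_A$, hence (being in the commutant of the von Neumann algebra generated by $U_A$) commutes with every spectral projection $E(B)$ of $U_A$; then
\[
\sigma_{\Phi_\lambda f}(B)=\langle E(B)\Phi_\lambda f,\Phi_\lambda f\rangle=\Vert\Phi_\lambda E(B)f\Vert^2\le\Vert\Phi_\lambda\Vert^2\Vert E(B)f\Vert^2=\Vert\Phi_\lambda\Vert^2\sigma_f(B),
\]
which gives the needed $\ll$ directly and makes the appeal to Herglotz unnecessary. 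Your polar-decomposition variant also works, since $V$ restricted to $\overline{\operatorname{ran}|\Phi_\lambda|}$ is a $U_A$--$U_B$ intertwining isometry and $|\Phi_\lambda|$ commutes with $U_A$, but the statement should still be phrased as an inequality of measures rather than of Fourier coefficients.
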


We are now ready to prove the analogue of the equidistribution statement of Lemma 3.4.  
\begin{lemma}\label{Lem:EquiFoli}
Let $x \in \T^d$ and let $y\in \mathfrak{s}$.  Then $x$ equidistributes under $S$ if and only if $x+y$ equidistributes under $S$. The analoguous statement holds for the action of $T$ and $y \in \mathfrak{t}$.
\end{lemma}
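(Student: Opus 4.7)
The plan is to decompose $y = y_- + y_0$ with $y_- \in \mathfrak{s}_-$ and $y_0 \in \mathfrak{s}_0'$ and handle each piece separately. The contracting piece is easy, exactly as in Lemma 3.1 of the hyperbolic case: since $\Vert S^n y_- \Vert \to 0$, uniform continuity of any $f \in C(\T^d)$ forces $|f(S^n(x+y_-)) - f(S^n x)| \to 0$, so the Ces\`aro averages along $x$ and $x+y_-$ coincide in the limit. Consequently it suffices to treat the case $y = y_0 \in \mathfrak{s}_0'$, where the key identity is $S^n(x+y_0) = S^n x + R^n y_0$, thanks to the fact that $S$ restricts to the semisimple rotation $R$ on $\mathfrak{s}_0'$ (this is precisely why the definition of $\mathfrak{s}$ was tailored to exclude the sheared part of $\mathfrak{s}_0$).

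For the $y_0$ contribution, I would identify the orbit closure $Y \subset \T^d$ of $\{R^n y_0 \bmod \Z^d\}_n$. Because $R$ acts on $\mathfrak{s}_0' \cong \R^{2m}$ by a product of planar rotations, this orbit closure is (a translate of) a compact abelian subgroup of $\T^d$, and the restricted map $R|_Y$ is a minimal rotation carrying Haar measure $\nu_Y$. In particular $y_0$ equidistributes on $(Y, \nu_Y)$ under $R|_Y$, and the associated $L_0^2$ action has purely atomic spectral type. Combined with the computation earlier in the section showing that $S$ acting on $(\T^d, m)$ has Lebesgue spectral type, the spectral types of $(\T^d, m, S)$ and $(Y, \nu_Y, R|_Y)$ are mutually singular; by the proposition recalled above, these two ergodic systems are disjoint.

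Lemma~\ref{Lem:DisjointEqui} then gives that $(x, y_0)$ equidistributes in $\T^d \times Y$ under the product action $S \times R|_Y$ with respect to $m \times \nu_Y$. Pushing forward by the continuous map $\Phi: \T^d \times Y \to \T^d$, $\Phi(u,v) = u+v$, yields, for any $f \in C(\T^d)$,
\[
\frac{1}{N}\sum_{n=0}^{N-1} f(S^n(x+y_0)) = \frac{1}{N}\sum_{n=0}^{N-1} f(S^n x + R^n y_0) \longrightarrow \int_{\T^d \times Y} f(u+v)\, d(m \times \nu_Y) = \int_{\T^d} f\, dm,
\]
the final equality by translation invariance of Haar measure on $\T^d$. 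Reversing the roles of $x$ and $x+y$ (i.e.\ replacing $y$ by $-y$, which lies in the same subspace) gives the converse, and the argument for $T$ and $\mathfrak{t}$ is verbatim. The main obstacle I expect is the identification of $Y$ as a genuine compact abelian group carrying a minimal equicontinuous $R$-action with pure point spectrum, i.e.\ making precise the inhomogeneous Kronecker--Weyl statement for the orbit of $y_0$ under the semisimple rotation $R$; once this is done, the disjointness input and Lemma~\ref{Lem:DisjointEqui} assemble the conclusion cleanly.
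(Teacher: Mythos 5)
Your proof takes essentially the same route as the paper: reduce to $y$ in the central eigenspace $\mathfrak{s}_0'$ (the contracting part being handled exactly as in Section~3), use the mutually singular spectral types to conclude that $(\T^d,m,S)$ is disjoint from the rotation system, invoke Lemma~\ref{Lem:DisjointEqui}, and push forward under $(u,v)\mapsto u+v$. If anything you are a bit more careful than the published argument: you work with the actual orbit closure $Y$ of $y_0$, a compact abelian group on which $R$ restricts to a minimal, uniquely ergodic rotation with pure point spectrum, whereas the paper asserts the orbit of $y$ is dense in a full copy of $\T^m$, which need not hold unless the rotation frequencies together with $1$ are rationally independent; your formulation sidesteps this without changing the structure, and you also spell out explicitly the final pushforward step, which the paper summarizes only as ``we may identify the action of $S$ on $x+y$ with the action of $S\times R$ on $(x,y)$.''
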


\begin{proof}
Let $x\in \T^d$ and $y\in \mathfrak{s}$ with $y\neq 0$ (otherwise the statement is a tautology). Suppose that $x$ equidistributes in $\T^d$.  We wish to show that $x+y$ equidistributes as well. The result for $y$ in the contracting direction of $S$ was already shown in Section 3, so without loss of generality we will assume that $y$ is in the central eigenspace of $S$ and that $S$ acts on $y$ as the irrational rotation $R:\T^m \to \T^m$ described above. Now the orbit of $y$ under $R$ is dense in a copy of $\T^m$ in $\mathfrak{s}$.  In fact, as $R$ is an irrational rotation and so is uniquely ergodic, $y$ equidistributes in $\T^m$ under $R$.  In the last few paragraphs, it has been established that the action of $S$ on $\T^d$ and the action of $R$ on $\T^m$ are disjoint.  Since we may identify the action of $S$ on $x+y$ with the action of $S\times R$ on $(x,y)$, we see that by Lemma~\ref{Lem:DisjointEqui}, $x+y$ equidistributes under $S$.  
\end{proof}

At this point we are in a position to supply the proof of the following weaker version of Theorem~\ref{Thm3}.

\begin{theorem}\label{Thm2}
Let $\T^d =\R^d / \Z^d$ be the $d$ dimensional torus.  Let $S$ and $T$ be quasihyperbolic endomorphisms of $\T^d$. Denote by $\mathfrak{s}_-$ (resp. $\mathfrak{t}_-$) the subspace contracted by $S$ (resp. by $T$) and denote by $\mathfrak{s}_0'$ (resp. $\mathfrak{t}_0'$) the sum of the eigenspaces of $S$ (resp. of $T$) of eigenvalue of modulus 1.  Assume that $\R^d$ is spanned by $\mathfrak{s}_-\oplus \mathfrak{s}_0'$ and $\mathfrak{t}_-\oplus \mathfrak{t}_0'$ and that both of these subspaces are nontrivial.  Then 
\[
\dim( Eq(S)  \cap NEq(T)) = d.
\]
\end{theorem}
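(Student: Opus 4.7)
The plan is to mimic the Version~1 Marstrand-slicing approach of Theorem~\ref{Thm1}, using the two foliation results just established: by Lemma~\ref{Lem:EquiFoli} applied to $S$, the set $Eq(S)$ is $\mathfrak{s}$-saturated, and the analogous statement applied to $T$ gives that $NEq(T)$ is $\mathfrak{t}$-saturated. Set $A=\mathfrak{t}\cap Eq(S)$ and, for each $a\in A$, $B_a=(a+\mathfrak{s})\cap NEq(T)$. The Fubini argument of Lemma~\ref{lem:FullMeas} uses only the $\mathfrak{s}$-saturation of $Eq(S)$ and so goes through verbatim, giving that $A$ has full Lebesgue measure in $\mathfrak{t}$, whence $\dim A=\dim\mathfrak{t}$. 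Since $a\in\mathfrak{t}$ and $NEq(T)$ is $\mathfrak{t}$-saturated, $B_a=a+(\mathfrak{s}\cap NEq(T))$, so $\dim B_a=\dim(\mathfrak{s}\cap NEq(T))$ uniformly in $a\in A$.

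The key step is to show $\dim(\mathfrak{s}\cap NEq(T))=\dim\mathfrak{s}$. Since every eigenvalue of $T$ is nonzero, $T\in\GL_d(\R)\cap\Mat_{d,d}(\Z)$, so Theorem~\ref{thm:BFK} yields that $E(T,0)\subseteq ND(T)$ is $1/2$-winning in $\T^d$; in particular $\dim NEq(T)\geq\dim ND(T)=d$. On the other hand, the $\mathfrak{t}$-saturation of $NEq(T)$ combined with the direct sum decomposition $\R^d=\mathfrak{s}\oplus\mathfrak{t}$ implies that, after lifting to $\R^d$, the set $NEq(T)$ equals the Minkowski sum $(\mathfrak{s}\cap NEq(T))+\mathfrak{t}$, which via the direct sum is bi-Lipschitz equivalent to the product $(\mathfrak{s}\cap NEq(T))\times\mathfrak{t}$. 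Because $\mathfrak{t}$ is a Euclidean subspace, its upper-box and Hausdorff dimensions coincide, so the product formula for Hausdorff dimension is exact and yields
\[
\dim NEq(T)=\dim(\mathfrak{s}\cap NEq(T))+\dim\mathfrak{t}.
\]
Combining with $\dim NEq(T)=d$ gives $\dim(\mathfrak{s}\cap NEq(T))=d-\dim\mathfrak{t}=\dim\mathfrak{s}$.

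At this point Marstrand slicing (Theorem~\ref{thm:MarSlice}) applied to $A\subseteq\mathfrak{t}$ with fibers $B_a$ gives
\[
\dim(Eq(S)\cap NEq(T))\geq\dim A+\inf_{a\in A}\dim B_a=\dim\mathfrak{t}+\dim\mathfrak{s}=d,
\]
and the matching upper bound is trivial. The main obstacle I anticipate is precisely the product-structure argument in the previous paragraph: one would prefer to mimic the winning-set route of Lemma~\ref{lem:WinProj} and conclude that $\mathfrak{s}\cap ND(T)$ is winning in $\mathfrak{s}$, but the analogue of the foliation lemma of Section~3 for $ND(T)$ fails in the quasihyperbolic setting. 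Indeed, shifts of a point in the central direction of $T$ can enlarge an orbit closure and destroy non-density, so $ND(T)$ is not $\mathfrak{t}$-saturated, and Lemma~\ref{lem:WinProj} is not directly applicable. This is exactly why the theorem is phrased with $NEq(T)$ rather than $ND(T)$: Lemma~\ref{Lem:EquiFoli} provides a saturation statement for equidistribution rather than density, and the dimension-theoretic identity above substitutes for the missing winning-set input.
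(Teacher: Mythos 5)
Your proof is correct, but the closing paragraph contains a misconception that led you to an unnecessarily heavy substitute for what is in fact the paper's own (and simpler) route. You claim the winning-set approach via Lemma~\ref{lem:WinProj} is blocked because ``$ND(T)$ is not $\mathfrak{t}$-saturated'' in the quasihyperbolic setting. But Theorem~\ref{Thm2} concerns $NEq(T)$, not $ND(T)$, and $NEq(T)$ \emph{is} $\mathfrak{t}$-saturated: Lemma~\ref{Lem:EquiFoli} applied to $T$ shows that $Eq(T)$ is $\mathfrak{t}$-saturated, hence so is its complement. Therefore $\pi_{\mathfrak{s}}^{-1}(\mathfrak{s}\cap NEq(T)) = NEq(T)$, which is $1/2$-winning in $\T^d$ because it contains $ND(T)\supseteq E(T,0)$ and Theorem~\ref{thm:BFK} applies; Lemma~\ref{lem:WinProj} then yields directly that $\mathfrak{s}\cap NEq(T)$ is $1/2$-winning, hence full-dimensional, in $\mathfrak{s}$. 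This is exactly what the paper does. (Your observation that the corresponding foliation statement for $ND(T)$ is genuinely harder and requires the entropy machinery developed later in Section~4 is accurate, but that extra work is needed for Theorem~\ref{Thm3}, not for Theorem~\ref{Thm2}.)

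Your alternative route --- lifting to $\R^d$, using the $\mathfrak{t}$-saturation to write $NEq(T)$ as the Minkowski sum $(\mathfrak{s}\cap NEq(T))+\mathfrak{t}$, identifying this with the product $(\mathfrak{s}\cap NEq(T))\times\mathfrak{t}$ via the linear isomorphism $\mathfrak{s}\oplus\mathfrak{t}\cong\R^d$, and invoking the Hausdorff/box-dimension product formula (exact here because $\mathfrak{t}$ is a Euclidean subspace) --- is correct and reaches $\dim(\mathfrak{s}\cap NEq(T))=\dim\mathfrak{s}$. However, it buys you strictly less than the winning-set argument: it recovers only the dimension of the slice, whereas the paper's route records that the slice is $1/2$-winning, a structural property that survives countable intersection and is precisely what is reused to deduce Corollaries~\ref{Cor3} and~\ref{Cor4}. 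For the statement of Theorem~\ref{Thm2} alone, both routes succeed.
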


\begin{proof}[Proof of Theorem~\ref{Thm2}]
Lemma~\ref{Lem:EquiFoli} tells us that any set of the form $x+\mathfrak{s}$ is either contained in or disjoint from $Eq(S)$.  Similarly, any set of the form $x+\mathfrak{t}$ is either contained in or disjoint from $NEq(T)$.  Thus we may write 
\[
Eq(S) = \bigcup_{x\in\mathfrak{t}\cap Eq(S)}x+\mathfrak{s} \text{ and } NEq(T) = \bigcup_{x\in\mathfrak{s}\cap NEq(T)}x+\mathfrak{t}.
\]
The same proofs as given in Lemmas 3.2 and 3.4 show that $\mathfrak{t}\cap Eq(S)$ has full measure in $\mathfrak{t}$ and $\mathfrak{s}\cap NEq(T)$ is winning in $\mathfrak{s}$ (where we note that $NEq(T)$ is winning because it contains the winning set $ND(T)$).  Furthermore, both versions of the proof of Theorem~\ref{Thm1} may be applied in the current situation to finish the proof.
\end{proof}

To prove Theorem~\ref{Thm3}, we must work a bit harder to show the analogue of Lemma 3.4 for points with nondense orbit under the transformation $T$.  To prove that for any $x\in ND(T)$ and $y\in\mathfrak{t}$, the point $x+y$ also has nondense orbit, we must introduce and use topological entropy.  (See \cite{W} for complete details.)

Let $X$ be a compact topological space and let $A$ be a continuous self-map of $X$.  We recall the definition of the topological entropy of $A$.  For two open covers $\mathcal{U}$ and $\mathcal{V}$ of $X$ define their common refinement (or join) $\mathcal{U} \vee \mathcal{V}$ to be the open cover consisting of sets of the form $U \cap V$ for $U\in\mathcal{U}$ and $V\in\mathcal{V}$.  For an open cover $\mathcal{U}$ of $X$, let $N(\mathcal{U})$ be the number of elements in a minimal subcover of $X$.  We define $H(\mathcal{U}) = \log N(\mathcal{U})$.  The topological entropy of $A$ relative to $\mathcal{U}$ is 
\[
h(A, \mathcal{U}) = \lim_{n\to\infty} \frac{1}{n} H(\bigvee_{i=0}^{n-1}A^{-i}\mathcal{U}).
\]
This limit always exists by a lemma of Fekete.  Finally, define the topological entropy of $A$ to be
\[
h(A) = \sup_{\mathcal{U}} h(A,\mathcal{U}),
\]
where the supremum ranges over all open covers of $X$.  

\begin{lemma}
For a closed subset $Y\subseteq X$ with $AY \subseteq Y$, 
\[h(A|_Y)\leq h(A).\]
\end{lemma}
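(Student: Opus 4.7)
The plan is to prove the inequality by relating open covers of $Y$ to open covers of $X$ via the subspace topology and the fact that $Y$ is closed.

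First, fix an arbitrary open cover $\mathcal{V}$ of $Y$. Since $Y$ carries the subspace topology, every $V \in \mathcal{V}$ can be written as $V = Y \cap U_V$ for some open set $U_V \subseteq X$. Because $Y$ is closed, $X \setminus Y$ is open in $X$, so the collection
\[
\mathcal{U} = \{ U_V : V \in \mathcal{V} \} \cup \{ X \setminus Y \}
\]
is an open cover of $X$. This is the key construction; the hypothesis that $Y$ is closed enters precisely here, so that we do not lose openness in passing to $X$.

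Next, I would compare the joins. The invariance $AY \subseteq Y$ gives $(A|_Y)^{-i}(V) = Y \cap A^{-i}(U_V)$ for each $i$, so pulling back and joining commutes with restriction to $Y$. Consequently, any minimal subcover of $X$ drawn from $\bigvee_{i=0}^{n-1} A^{-i}\mathcal{U}$ restricts to a cover of $Y$ drawn from $\bigvee_{i=0}^{n-1} (A|_Y)^{-i}\mathcal{V}$ (the sets involving $X \setminus Y$ intersect $Y$ trivially and may be discarded). Therefore
\[
N\!\left( \bigvee_{i=0}^{n-1} (A|_Y)^{-i}\mathcal{V} \right) \leq N\!\left( \bigvee_{i=0}^{n-1} A^{-i}\mathcal{U} \right).
\]
Taking logarithms, dividing by $n$, and passing to the limit yields $h(A|_Y, \mathcal{V}) \leq h(A, \mathcal{U}) \leq h(A)$.

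Finally, taking the supremum over all open covers $\mathcal{V}$ of $Y$ gives $h(A|_Y) \leq h(A)$, as desired. I do not anticipate a serious obstacle: the main subtle point is simply the use of closedness of $Y$ to ensure $X \setminus Y$ is open, which allows the extension of a cover of $Y$ to a cover of $X$ without changing the combinatorics of minimal subcovers when restricted back to $Y$. Everything else is a bookkeeping check that preimages and joins interact well with restriction to an invariant subset.
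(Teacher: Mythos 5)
Your proof is correct and follows the same strategy as the paper's: lift an open cover of $Y$ to one of $X$ using the subspace topology, compare the $n$-fold joins, and pass to the limit. The one place you improve on the paper's exposition is in the extension step: the paper vaguely says to complete $\{\widetilde{U}\}$ to a cover of $X$ with ``any other open set,'' whereas you specify the set $X\setminus Y$, which is exactly what is needed (and exactly where closedness of $Y$ enters) so that the added set contributes nothing after intersecting with $Y$, making the inequality $N\bigl(\bigvee_{i=0}^{n-1}(A|_Y)^{-i}\mathcal{V}\bigr)\le N\bigl(\bigvee_{i=0}^{n-1}A^{-i}\mathcal{U}\bigr)$ transparent. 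Well done.
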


\begin{proof}
Let $\mathcal{U}$ be an open cover of $Y$.  By definition of the subspace topology,  each element $U\in \mathcal{U}$ is of the form $U = \widetilde{U}\cap Y$ for some open set $\widetilde{U}$ of $X$.  Set $\widetilde{\mathcal{U}}$ to be the collection consisting of these sets $\widetilde{U}$ along with any other open set completing $\widetilde{\mathcal{U}}$ to a cover of $X$.  Thus, $N(\mathcal{U})\leq N(\widetilde{\mathcal{U}})$.  Since $AY \subseteq Y$, $A|_Y^{-1}(\mathcal{U})$ is again an open cover of $Y$, as is $A^{-1}(\widetilde{\mathcal{U}})$ of $X$.  Moreover, elements of $A|_Y^{-1}(\mathcal{U})$ are of the form
\[
A|_Y^{-1}(U) = A^{-1}(U) \cap Y = A^{-1}(\widetilde{U}\cap Y)\cap Y = A^{-1}(\widetilde{U})\cap Y.
\]
Thus $N(\mathcal{U}\vee A|_Y^{-1}(\mathcal{U})) \leq N(\widetilde{\mathcal{U}}\vee A^{-1}(\widetilde{\mathcal{U}}))$.  Inductively, we arrive at the inequality $N(\bigvee_{i=0}^{n-1} A|_Y^{-i}(\mathcal{U})) \leq N(\bigvee_{i=0}^{n-1} A^{-i}(\widetilde{\mathcal{U}}))$.  By convexity of logarithm and continuity of limits, we see $h(A|_Y,\mathcal{U})\leq h(A,\widetilde{\mathcal{U}})$.  Since each open cover of $Y$ induces an open cover of $X$, we have that $h(A|_Y) \leq h(A)$.  
\end{proof}

In the specific case of ergodic toral automorphisms, we have a more specific version of Lemma 4.5.  The next lemma states that the density of an orbit can be measured by entropy.  It is in this lemma, where a result of Berg is applied, that it is essential that $T$ is an automorphism.  Denote by $O(x)$ the closure of the orbit $\{T^n x\}_{n\in \N}$ of $x$ under $T$.  

\begin{lemma}
For $x\in\T^d$, $O(x)\subsetneq \T^d$ if and only if $h(T|_{O(x)})< h(T)$.
\end{lemma}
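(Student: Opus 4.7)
The direction $(\Leftarrow)$ is the contrapositive of an obvious remark: if $O(x) = \T^d$ then $T|_{O(x)}$ and $T$ are literally the same continuous self-map of the same space, so their topological entropies agree, and in particular $h(T|_{O(x)}) < h(T)$ is impossible.

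For the direction $(\Rightarrow)$, the plan is to invoke a theorem of Berg (extended to quasihyperbolic toral automorphisms by Lind \cite{L}) asserting that Haar measure is the \emph{unique} $T$-invariant Borel probability measure of maximal entropy on $\T^d$, and to combine it with the variational principle. Concretely, set $Y := O(x) \subsetneq \T^d$; by hypothesis $Y$ is a proper closed $T$-invariant subset. The variational principle applied to $T|_Y$ writes $h(T|_Y)$ as the supremum of $h_\mu(T|_Y)$ over $T|_Y$-invariant Borel probabilities $\mu$ on $Y$. Any such $\mu$, regarded as a $T$-invariant measure on $\T^d$ supported in the proper subset $Y$, differs from Haar; hence Berg's uniqueness forces $h_\mu(T) < h(T)$ for each $\mu$. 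Upgrading these pointwise strict inequalities to a strict inequality for the supremum requires upper semi-continuity of $\mu \mapsto h_\mu(T)$ on the weak$^*$-compact simplex of $T|_Y$-invariant probabilities, which ensures the variational supremum is attained and therefore strictly less than $h(T)$.

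The main obstacle is precisely this upper semi-continuity/compactness step in the non-expansive quasihyperbolic setting (for the hyperbolic case expansiveness gives it for free, but here eigenvalues of modulus $1$ rule that out). This is also exactly where the hypothesis that $T$ be an \emph{automorphism}, not merely an endomorphism, is essential: Berg's route to the uniqueness of the measure of maximal entropy runs through the Bernoulli/$K$-property of ergodic toral automorphisms, which fundamentally requires two-sided invertibility and fails, in general, for noninvertible endomorphisms. Once the correct form of Berg's theorem is in place, the deduction of the lemma is short.
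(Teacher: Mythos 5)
Your proposal is correct and follows essentially the same route as the paper: both directions use the identification $T|_{O(x)}=T$ when $O(x)=\T^d$, and the hard direction combines the variational principle, Berg's uniqueness of the measure of maximal entropy (Haar), and upper semi-continuity of $\nu\mapsto h_\nu(T)$ (Newhouse) to show the supremum over measures supported on a proper closed invariant set is attained and strictly below $h(T)$. The paper phrases the last step as a contradiction via a weak$^*$ limit of a maximizing sequence rather than directly invoking attainment on the compact simplex, but the argument is the same.
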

\begin{proof}
By Lemma 4.5, for any $x\in \T^d$, we have $h(T|_{O(x)})\leq h(T)$.  If $x\in D(T)$, then $O(x) = \T^d$ and $h(T|_{O(x)}) = h(T)$.  Suppose, then, that $x\in ND(T)$.  We want to show $h(T|_{O(x)}) < h(T)$; assume otherwise. By the variational principle, there exists a sequence of probability measures $\mu_n$ supported on $O(x)$ with $h_{\mu_n}(T|_{O(x)})$ increasing to $h(T|_{O(x)})=h(T)$.  (Here $h_\nu(T)$ denotes the metric entropy of $T$ with respect to a probability measure $\nu$, and the variational principle states that $h(T) = \sup_\nu h_\nu(T)$.  See \cite{W} for the definition of metric entropy and details of the variational principle.)  Let $\mu$ be a weak* limit of the sequence $\mu_n$.  Since $h_\nu(T)$ is an upper semi-continuous function of $\nu$ (see Theorem 4.1 of \cite{Newhouse}), $h_\mu(T) = h(T|_{O(x)})=h(T)$.  By \cite{Berg}, the unique measure of maximal metric entropy for the ergodic toral automorphism $T$ is the Lebesgue measure on $\T^d$, whence $\mu = m$.  However, $\mu$ is supported on $O(x) \neq \T^d$.  Thus we arrive at a contradiction, and so $h(T|_{O(x)}) < h(T)$.  
\end{proof}

Now we will use these lemmas to prove the analogue of Lemma 3.4 for nondense orbits.  

\begin{lemma}
Let $x\in \T^d$ and let $y\in \mathfrak{t}$.  Then $x$ has nondense orbit under $T$ if and only if $x+y$ has nondense orbit under $T$.
\end{lemma}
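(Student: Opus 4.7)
The plan is to split $y$ along the $T$-invariant decomposition and treat each piece separately, appealing to Lemma 3.1 in the contracting direction and the entropy characterisation of Lemma 4.6 in the central direction. Write $y = y_- + y_0$ with $y_- \in \mathfrak{t}_-$ and $y_0 \in \mathfrak{t}_0'$. The argument of Lemma 3.1 (which only uses that $T^n y_- \to 0$) shows that $x$ has nondense orbit under $T$ if and only if $x + y_-$ does, so after replacing $x$ by $x + y_-$ it suffices to prove the statement under the additional assumption that $y \in \mathfrak{t}_0'$.

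For the forward implication, assume $x \in ND(T)$ and $y \in \mathfrak{t}_0'$; I want $x + y \in ND(T)$. The identity $T^n(x+y) = T^n x + T^n y$ shows that the closure $K$ of the diagonal orbit $\{(T^n x, T^n y)\}$ in $\T^d \times \T^d$ is a compact $T \times T$-invariant set contained in $O(x) \times O(y)$, and that the continuous equivariant map $\sigma(a,b) = a+b$ satisfies $\sigma(K) = O(x+y)$. Monotonicity of topological entropy under factors, invariant subsystems, and products then yields
\[
h(T|_{O(x+y)}) \;\le\; h\bigl((T\times T)|_K\bigr) \;\le\; h(T|_{O(x)}) + h(T|_{O(y)}).
\]

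The key step is to show $h(T|_{O(y)}) = 0$. Since $\mathfrak{t}_0'$ is $T$-invariant and $T|_{\mathfrak{t}_0'}$ equals the block-diagonal rotation $R$ from the spectral discussion above, the orbit $\{T^n y\}$ coincides with $\{R^n y\}$, which is bounded in $\mathfrak{t}_0'$. Its closure $Y \subseteq \mathfrak{t}_0'$ is therefore compact and $R$-invariant, and $R|_Y$ is an isometry for the induced metric, so $h(R|_Y) = 0$. The quotient map $\pi \colon \mathfrak{t}_0' \to \T^d$ is continuous, intertwines $R$ with $T$, and carries $Y$ onto $O(y)$, exhibiting $(O(y), T)$ as a topological factor of $(Y, R)$; hence $h(T|_{O(y)}) \le h(R|_Y) = 0$. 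Inserting this into the displayed inequality and using $h(T|_{O(x)}) < h(T)$ from Lemma 4.6 applied to $x \in ND(T)$ gives $h(T|_{O(x+y)}) < h(T)$, and a second application of Lemma 4.6 (in the contrapositive) yields $x + y \in ND(T)$. The reverse implication is immediate after writing $x = (x+y) + (-y)$ and applying the forward direction with $-y \in \mathfrak{t}$.

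The main obstacle I anticipate is making the zero-entropy claim for $T|_{O(y)}$ airtight: although $R|_Y$ is manifestly equicontinuous, one has to verify carefully that $\pi|_Y$ really realises $T|_{O(y)}$ as a topological factor of $R|_Y$, i.e.\ that it is continuous, surjective, and intertwines the dynamics, despite the fact that $\pi$ may fail to be injective on $\mathfrak{t}_0'$ when that subspace contains rational directions. Non-injectivity actually helps, since entropy can only decrease under factor maps. A secondary routine step is invoking the product and restriction identities $h\bigl((T\times T)|_{A\times B}\bigr) = h(T|_A) + h(T|_B)$ together with monotonicity under compact invariant subsystems in the correct order.
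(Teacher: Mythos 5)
Your proof is correct and follows essentially the same strategy as the paper: reduce to $y\in\mathfrak{t}_0'$, realize $O(x+y)$ as a topological factor of a compact $T\times R$-invariant subset of a product system, and bound $h(T|_{O(x+y)})\le h(T|_{O(x)})+0<h(T)$ using subsystem and factor monotonicity, the product entropy formula, the vanishing entropy of the rotation factor, and the entropy characterisation of nondensity. If anything you are a touch more careful than the paper at the one delicate step: the paper asserts that $O(x+y)$ is ``included'' in $O(x)\times\T^m$, whereas in general it is only a continuous equivariant \emph{image} (via the sum map) of the orbit closure $K$ of $(x,y)$ --- exactly the factor-map formulation you spell out, which also sidesteps any worry about $\mathfrak{t}_0'$ failing to be a rational subspace.
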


\begin{proof}
Assume $x\in ND(T)$, so that $O(x)\subsetneq \T^d$ and  by the previous lemma $h(T|_{O(x)}) < h(T)$.  We will show that $h(T|_{O(x+y)}) \leq h(T|_{O(x)})$.  By the previous lemma and symmetry, we will have proven the desired result.  

The result for $y$ in the contracting direction of $T$ was already shown in Section 3, so without loss of generality we will assume that $y$ is a nonzero element in the central eigenspace of $T$ and that $T$ acts on $y$ by the irrational rotation $R:\T^m \to \T^m$ described above.  The system $(O(x), T|_{O(x)})$ is a factor of the system $(O(x) \times \T^m, T|_{O(x)}\times R)$ in the natural way.  It is a basic fact of topological entropy that $h(T|_{O(x)}\times R) = h(T|_{O(x)})+ h(R)$.  Moreover, since $R$ is a rotation $h(R) = 0$.  (See \cite{W}.)  Hence $h(T|_{O(x)}\times R) = h(T|_{O(x)})$.  Now, $O(x+y)$ is in a natural manner included in $O(x) \times \T^d$ as a closed $T\times R$ invariant set.  Applying Lemma 4.4, we have that $h(T|_{O(x+y)}) \leq h(T|_{O(x)})$, finishing the proof.
\end{proof}

All of the pieces are in place to prove Theorem~\ref{Thm3}.
\begin{proof}[Proof of Theorem~\ref{Thm3}]
Lemma~\ref{Lem:EquiFoli} tells us that any set of the form $x+\mathfrak{s}$ is either contained in or disjoint from $Eq(S)$.  Similarly, Lemma 4.6 states any set of the form $x+\mathfrak{t}$ is either contained in or disjoint from $ND(T)$.  Thus we may write 
\[
Eq(S) = \bigcup_{x\in\mathfrak{t}\cap Eq(S)}x+\mathfrak{s} \text{ and } ND(T) = \bigcup_{x\in\mathfrak{s}\cap ND(T)}x+\mathfrak{t}.
\]
The same proofs as given in Lemmas 3.5 and 3.7 show that $\mathfrak{t}\cap Eq(S)$ has full measure in $\mathfrak{t}$ and $\mathfrak{s}\cap ND(T)$ is winning in $\mathfrak{s}$.  Furthermore, both versions of the proof of Theorem~\ref{Thm1} may be applied in the current situation to finish the proof.
\end{proof}

The proofs of Corollaries~\ref{Cor3} and \ref{Cor4} follow exactly the same line of reasoning given in Section 3 for Corollaries~\ref{Cor1} and \ref{Cor2}.

\end{document}